\newcommand{\artanh}{\operatorname{artanh}}
\newcommand{\Cc}{\mathbb{C}}
\newcommand{\Ee}{\mathcal{E}}
\newcommand{\geqs}{\geqslant}
\newcommand{\lambmax}{\lambda_{\max}}
\newcommand{\leqs}{\leqslant}
\newcommand{\mmax}{m_{\max}}
\newcommand{\Oo}{\mathcal{O}}
\newcommand{\Rr}{\mathbb{R}}
\newcommand{\tol}{\texttt{tol}}
\newtheorem{remark}{Remark}
\newtheorem{proposition}{Proposition}
\begin{document}
    \titlerunning{Explicit adaptive time stepping for the Cahn--Hilliard equation}
    \authorrunning{Botchev}

    \title{Explicit adaptive time stepping for the Cahn--Hilliard equation 
      by exponential Krylov subspace and\\Chebyshev polynomial methods}

    \author{\firstname{Mikhail A.}~\surname{Botchev}}
    \email[E-mail: ]{botchev@kiam.ru}
    \affiliation{Keldysh Institute of Applied Mathematics of Russian Academy of Sciences, Moscow, 125047 Russia}

    \firstcollaboration{~~~}

\received{January ---, 2025;  
revised ---, 2025; 
accepted ---, 2025}

\begin{abstract}
The Cahn--Hilliard equation has been widely employed within various
mathematical models in physics, chemistry and engineering.
Explicit stabilized time stepping methods can be attractive for 
time integration of the Cahn--Hilliard equation, especially
on parallel and hybrid supercomputers.  
In this paper, we propose an exponential time integration method for the 
Cahn--Hilliard equation and describe its efficient Krylov subspace based
implementation.  We compare the method to a Chebyshev polynomial
local iteration modified (LIM) time stepping scheme.  Both methods
are explicit (i.e., do not involve linear system solution)
and tested with both constant and adaptively chosen time steps.
\end{abstract}
\subclass{65M20, 65L04, 65L20}
\keywords{Cahn--Hilliard equation; explicit stabilized time integration;
exponential time integration; Krylov subspace methods; local iteration methods;
Chebyshev polynomials}

\maketitle

\section{Introduction}
The Cahn-Hilliard equation has been instrumental in various 
physical, chemical and engineering applications, such as 
multiphase fluid dynamics~\cite{Feng_ea2020,XiaKimLi2022},
image inpainting~\cite{CherfilsFakihMiranville2016},
quantum dot formation~\cite{WiseLowengrubKimJohnson2004},
and flow visualization~\cite{Garcke_ea2001}.
The need to solve the Cahn--Hilliard efficiently has triggered 
developments in various fields of scientific computing,
in particular, in
time integration 
methods~\cite{Eyre1998,Eyre1998a,VollmayrleeRutenberg2003,WiseWangLowengrub2009,TierraGGonzalez2015},
different types of finite element methods~\cite{WellsKuhlGarikipati2006,KayWelford2006,GuoXu2014,ShenYang2010},
meshless methods~\cite{DehghanMohammadi2015}, 
and other scientific computing fields.

An important property of the Cahn--Hilliard equation is
typically a large  discrepancy of the involved time scales,
which leads to numerical stiffness. 
Nonlinearity and stiffness of the Cahn--Hilliard equation 
make its numerical time integration a highly challenging task.
Characteristic time scales in the equation can
vary from $\epsilon^2$ at initial evolution times
(where $0<\epsilon\ll 1$ is defined below in~\eqref{CH},\eqref{eps})
to $1/\epsilon$ at later evolution stages.
In addition, for some values of $\epsilon$ 
(see  discussion below relation~\eqref{eps}),
the norm of the Jacobian matrix in the equation is proportional
to $h^{-4}$, with $h$ being a typical space grid size.
Stiffness can usually be well handled by implicit schemes.  However,
employing implicit time integration schemes for
the Cahn--Hilliard equation is not straightforward.
Indeed, solutions of the Cahn--Hilliard equation may exhibit
a growth in a certain norm, so that fully implicit time stepping
schemes may not be stable~\cite{StuartHumphries1994,Eyre1998,Eyre1998a}
and/or uniquely resolvable in the sense that the next time step
solution may not be unique~\cite{VollmayrleeRutenberg2003,WiseWangLowengrub2009,TierraGGonzalez2015}.

Thus, to be efficient for the Cahn--Hilliard equation, a time integration scheme 
should allow sufficiently large time steps and be adaptive, so that 
eventual accuracy or stability loss can properly be handled.
A majority of time integration schemes being employed nowadays
for the Cahn--Hilliard equation
are (partially) implicit, see, 
for instance,~\cite{Lee_ea2013,TierraGGonzalez2015}.
However, solving one or several (non)linear systems per time step
can become a severe obstacle for implementing implicit schemes on
modern multiprocessor or heterogeneous computers.
This is because parallelism in direct and preconditioned iterative linear
solvers is inherently restricted on distributed memory systems.
A compromise between stability and implementation simplicity
can be provided by explicit stabilized time integration 
methods (see, e.g.,~\cite{LokLokDAN,ShvedovZhukov,RKC97,Lebedev98,MRAIpap,RKC2004})
and we explore this possibility here.
We note that apparently one of the fist known explicit stabilized
schemes, based on Chebyshev polynomials, is due to Yua\'n \v{C}\v{z}ao-Din 
who did his PhD research with L.A.~Lyusternik and V.K.~Saul'ev 
at Moscow State University in the 1950s~\cite{CzaoDin1957, CzaoDin1960}.

In this paper, we propose an exponential time integration 
scheme for the Cahn--Hilliard equation.  The scheme is explicit
in the sense that it requires computing only matrix-vector products
and involves no linear system solutions.  
It is based on efficient restarted Krylov subspace evaluation
of the relevant matrix function (more precisely, 
the $\varphi$ function defined below in~\eqref{phi})~\cite{ART,BKT21}.
We show that this scheme has a nonstiff order~2 and design 
an adaptive time stepping strategy for the scheme.
We refer to the proposed scheme as EE2, exponential Euler scheme of order~2.
Another small contribution of this paper is that
the restarting procedure for the Krylov subspace method, the so called
residual-time (RT) restarting~\cite{ART,BKT21},
is naturally extended to a nonlinear setting.
This allows to significantly improve efficiency of the EE2 scheme.
Furthermore, the proposed EE2 scheme 
is compared numerically with the local iteration modified (LIM) 
time stepping scheme~\cite{LokLokDAN,Zhukov2011}.

The LIM scheme is an explicit time integration scheme where stability
is achieved by carrying out a certain number of Chebyshev iterations.
Algorithmically, the Chebyshev iterations in the LIM scheme can be viewed 
as steps of the simplest explicit Euler scheme with specially chosen
parameters.  Therefore, the LIM scheme is very easy to implement, also
in parallel and heterogeneous environments.
However, unlike the explicit Euler scheme,
the time step size in the LIM scheme is not restricted by the severe
stability restrictions.
For instance, in parabolic problems the time step size $\tau$
of the explicit Euler scheme is subject to the CFL stability restriction
$\tau\leqs\tau_{\max}=\Oo(h^2)$.  
Hence, to integrate the problem on a time interval $t\in[0,T]$,
the explicit Euler scheme needs at least $T/\tau_{\max} = \Oo(Th^{-2})$ time steps.
In contrast, as will become clear from further discussion
(see relations~\eqref{p},\eqref{p_est}), for the same
problem the LIM scheme requires $\sqrt{\Oo(T h^{-2})} = \Oo(\sqrt{T} h^{-1})$
iterative steps, each equal in cost to an explicit Euler step.
In addition, the LIM scheme is monotone~\cite{BotchevZhukov2025}, i.e.,
it preserves solution nonnegativity.
The LIM scheme has recently been shown to work for the Cahn--Hilliard
equation successfully~\cite{BotchevFahurdinovSavenkov2024}, also in combination
with a variant of the Eyre convex splitting.  
In~\cite{BotchevFahurdinovSavenkov2024} the LIM scheme is applied 
with a constant time step size.
To get an adaptive time step size selection in the LIM scheme, here we adopt the
strategy proposed in~\cite{BotchevZhukov2024}. 

For both EE2 and LIM time integration schemes it is discussed how
these schemes can be combined with a variant of the Eyre convex splitting,
and it is shown that the splitting leads to a matrix similar 
to a symmetric positive semidefinite matrix.

The paper is organized as follows. Section~\ref{s:prob} is devoted
to the problem setting.  The LIM scheme is described in Section~\ref{s:LIM},
and, in particular, its combination with the Eyre convex splitting
in Section~\ref{s:LIMcs} and an adaptive time step selection in 
Section~\ref{s:LIMadapt}.  Section~\ref{s:EE} presents the EE2 scheme,
including its Krylov subspace implementation (Section~\ref{s:EERT}),
analysis of its consistency order (Section~\ref{s:accEE}),
its combination with the Eyre splitting (Section~\ref{s:EEcs})
and its adaptive time stepping variant (Section~\ref{s:EEadapt}).
Numerical experiments are discussed in Section~\ref{s:num} and
conclusions are drawn in Section~\ref{s:concl}.

Unless indicated otherwise, throughout the paper $\|\cdot\|$ denotes 
the Euclidean vector or matrix norm.

\section{Problem setting}
\label{s:prob}
We consider the Cahn-Hilliard equation, posed in for an unknown function
$c(x,t)$, with $(x,t)\in\Omega\times[0,T]$, $\Omega\subset\Rr^d$, $d=1,2,3$,
and accompanied by initial and homogeneous Neumann boundary conditions: 
\begin{equation}
\label{CH}
\begin{aligned}
\frac{\partial c(x,t)}{\partial t} 
&= \Delta\mu(c(x,t)),
\\
\mu(c(x,t)) &= F'(c(x,t)) - \epsilon^2 \Delta c(x,t),
\\
\frac{\partial c}{\partial n} &= \frac{\partial\mu}{\partial n} = 0
\quad \text{for} \quad x\in\partial\Omega,
\\
c(x,0) &= c_0(x).
\end{aligned}
\end{equation}
Here the sought after function $c(x,t)$ denotes a difference of two mixture 
concentrations, $\mu(c(x, t)$ is the chemical potential,
$F(c)$ is the Helmholtz free energy defined as
\begin{equation}
\label{F}
F(c) = \frac14( 1 - c^2 )^2 , \quad
F'(c) = c( c^2 - 1 ),
\end{equation}
$\epsilon>0$ is a small parameter related to the interfacial energy,
$n$ is the outward unit normal vector to $\partial\Omega$, and
$c_0(x)$ is the given initial mixture concentration difference.
Two important characteristic properties of the Cahn--Hilliard equation
are that a total energy functional $\Ee(c)$ does not increase with time
and the total mass is preserved:
\begin{align}
\label{Ee}
& \frac{d}{d t}\Ee(c(t)) \leqs 0 \quad\text{for}\quad
\Ee(c) := \int_\Omega \left(F(c) + \frac{\epsilon^2}2|\nabla c|^2\right)d\,x,
\\
\label{m}
& \frac{d}{d t} \int_\Omega c(x,t) d\,x = 0.
\end{align}

We assume that a space discretization is applied to the 
initial boundary value problem~\eqref{CH}, leading to a initial value
problem (IVP) 
\begin{equation}
\label{IVP}
\begin{aligned}
y'(t) &= -A \left( F'(y(t)) + \epsilon^2 A y(t) \right),
\\
y(0) &= y^0,
\end{aligned}
\end{equation}
where the components $y_j(t)$ of the unknown vector function
$y: \Rr\rightarrow\Rr^N$ contain spatial grid values of the
mixture concentration difference $c(x,t)$ at time moment~$t$
and $A\in\Rr^{N\times N}$ is a symmetric positive semidefinite matrix
such that $-A$ is a Laplacian discretization. 
The value of $\epsilon$ is often chosen as~\cite{Lee_ea2013} 
\begin{equation}
\label{eps}
\epsilon := \epsilon_m = \frac{h m}{2\sqrt{2} \artanh(9/10)}, 
\end{equation}
where $h$ is the space grid step and $m$ is the number of grid points 
on which the concentration $c(x,t)$ changes from its minimum
its maximum (or vice versa).
Since for typical Laplacian discretizations we have $\|A\|=\Oo(h^{-2})$,
we should expect $\epsilon^2 \|A\|=\Oo(1)$ for $\epsilon$ chosen
as in~\eqref{eps}.  Then the norm of the right hand side
function $-A \left((F'(y(t)) + \epsilon^2 A y(t)\right)$ in~\eqref{IVP}
is $\Oo(h^{-2})$ and the problem~\eqref{IVP} is moderately stiff.
However, if it is desirable to refine the space 
grid while keeping the value of~$\epsilon$ fixed, we will have
$\epsilon^2 \|A\|=\Oo(h^{-2})$.  In this case the norm of the right hand side
function $-A \left((F'(y(t)) + \epsilon^2 A y(t)\right)$ in~\eqref{IVP}
will be proportional to~$h^{-4}$ and the problem~\eqref{IVP} can be 
considered as highly stiff. 

\begin{remark}
\label{rmkM}
For the Cahn--Hilliard equation
\begin{equation}
\label{CHmob}  
\frac{\partial c(x,t)}{\partial t} = \nabla\left(M(c)\nabla\mu(c(x,t))\right),
\end{equation}
where $M(c)$ is the diffusional mobility, the space discretized equation 
in~\eqref{IVP} takes form
$$
y'(t) = -\tilde{A}(y) \left( F'(y(t)) + \epsilon^2 A y(t) \right),
$$  
with $A_M(y)y$ being a discretization of the diffusion
operator $\nabla(M(c)\nabla c)$.
The methods considered in this paper are directly applicable to
the equation~\eqref{CHmob} provided that
$A_M(y)$ can be taken constant during a time step, i.e.,
taken, for instance, to be $A_M(y^n)$. 
Then it suffices to define the matrix $\widehat{A}_n$
below in~\eqref{A_g} as
$$
\widehat{A}_n = A_M(y^n) \left( J_n + \epsilon^2 A \right).
$$
If the Eyre convex splitting is included,
a similar modification should then also be done in~\eqref{A_g_cs}.
\end{remark}

\section{LIM (local iteration modified) time integration scheme}
\label{s:LIM}
\subsection{Description of the LIM scheme}
For the implicit Euler method for~\eqref{IVP}
\begin{equation}
\label{EB}
\frac{y^{n+1} - y^n}{\tau} = -A \left(F'(y^{n+1}) + \epsilon^2 Ay^{n+1}\right),  
\end{equation}
with $\tau>0$ being the time step size,
apply a linearization
\begin{equation}
\label{lin}
F'(y^{n+1}) \approx F'(y^n) + J_n(y^{n+1} - y^n)  
\end{equation}
and consider its linearized version
\begin{gather}
\label{EBlin}
\frac{y^{n+1} - y^n}{\tau} = 
-\widehat{A}_n y^{n+1} + \widehat{g}^n, \quad \text{with}
\\
\label{A_g}
\widehat{A}_n = A \left( J_n + \epsilon^2 A \right), \quad
\widehat{g}^n = A\left( J_n y^n - F'(y^n) \right).
\end{gather}
The matrix $J_n\in\Rr^{N\times N}$ in~\eqref{lin} is the Jacobian of the mapping 
$F'(y)$ (cf.~\eqref{F}) evaluated at $y=y^n$:
\begin{equation}
\label{J}
J_n = \frac{\partial}{\partial y} F'(y^n) = 
3 \operatorname{diag} \left(\, (y_1^n)^2, \, \dots,\, (y_N^n)^2 \,\right) - I,
\end{equation}
where $\operatorname{diag} (a_{11},\dots,a_{NN})$ denotes a diagonal matrix
with entries $a_{11}$, \dots, $a_{NN}$ 
and $I$ is the $N\times N$ identity matrix.

In the LIM scheme 
the linear system solution needed
to compute $y^{n+1}$ in the linearized implicit Euler scheme~\eqref{EBlin}
is replaced by $2p-1$ Chebyshev iterations, each of which can be seen as
an explicit Euler step with a specially chosen time step size.
These $2p-1$ iterations, combined, form actions of a Chebyshev polynomial 
to guarantee a time integration stability.
The value of~$p$, which determines the Chebyshev polynomial order, 
is chosen based on the time step size~$\tau$
and an upper spectral bound $\lambmax\approx\|\widehat{A}_n\|$ as
\begin{equation}
\label{p}
p = \left\lceil\dfrac{\pi/4}{\pi/2-\arctan\sqrt{\tau\lambmax}}\right\rceil,
\end{equation}
where $\lceil x \rceil$ is the ceiling function defined as the smallest
integer greater than or equal to~$x$.
We have 
\begin{equation}
\label{p_est}
\begin{gathered}
\dfrac{\pi/4}{\pi/2-\arctan\sqrt{\tau\lambmax}} \leqs 
\frac{\pi}4\sqrt{\tau\lambmax + 1},
\\
\frac{\pi}4\sqrt{\tau\lambmax + 1}
-\frac{\pi/4}{\pi/2-\arctan\sqrt{\tau\lambmax}} \rightarrow 0,
\qquad \tau\lambmax\rightarrow\infty,
\end{gathered}
\end{equation}
so that $p\sim \sqrt{\tau \,\lambmax}$.  

An algorithmic description of the LIM scheme time step
$y^n\rightarrow y^{n+1}$, replacing the linearized implicit Euler
step~\eqref{EBlin}, is presented in Figure~\ref{f:LIM}.
As we see, the main steps in the algorithm, i.e., steps~6 and~10, can be seen
as explicit Euler steps.  Note that the scheme is quite simple and 
does not require any parameter tuning.  Furthermore, note that $a_1=0$
and therefore,  
in case $\tau$ is taken so small that relation~\eqref{p} sets $p=1$, 
the LIM scheme is reduced to the explicit Euler scheme.

\begin{figure}
\begin{center}
\begin{tabular}{rl}
\hline\hline
\multicolumn{2}{l}{A LIM scheme time step $y^n\rightarrow y^{n+1}$ 
for IVP $y'(t)=-\widehat{A}_ny(t) + \widehat{g}^n$, $y(0)=y^n\phantom{\biggl|}$.}
\\
\multicolumn{2}{l}{\textbf{Given:} 
time step size $\tau>0$, 
$\widehat{A}_n\in\Rr^{N\times N}$, $\widehat{g}^n\in\Rr^N$, $y^n\in\Rr^N$.}
\\
\multicolumn{2}{l}{\textbf{Output:} 
solution $y^{n+1}$ at time step~$n+1$.}
\\\hline
1. & Determine a spectral upper bound 
     $\lambmax:=\|\widehat{A}_n\|_1=\max_j\sum_i|\widehat{a}_{ij}|$
\\
   & and Chebyshev polynomial order~$p$ by~\eqref{p}. 
\\
2. & Determine an ordered set 
$\{ \beta_m, \; m=1,\ldots,p \} = \left\{ \cos\pi\frac{2i-1}{2p}, \; i=1,\dots,p\right\}$, 
\\
   & with $\beta_1=\cos({\pi}/{2p})$.
\\
3. & $z_1:=\beta_1$, $y^{\text{(prev)}}:=y^n$.
\\
4. & For $m=1,\dots,p$
\\
5. & \hspace*{2em}
$a_m:=\dfrac{\lambmax}{1+z_1}(z_1 - \beta_m)$
\\
6. & \hspace*{2em}
$y^{\text{(next)}} := \dfrac1{1+\tau a_m}\left(
y^n + \tau a_my^{\text{(prev)}} + \tau(\widehat{g}^n - \widehat{A}_n y^{\text{(prev)}})
\right),$
\\
7. & \hspace*{2em} $y^{\text{(prev)}}:=y^{\text{(next)}}$.
\\
8.  & end for
\\
9. & For $m=2,\dots,p$
\\
10. & \hspace*{2em} 
$y^{\text{(next)}} := \dfrac1{1+\tau a_m}\left(
y^n + \tau a_my^{\text{(prev)}} + \tau(\widehat{g}^n - \widehat{A}_n y^{\text{(prev)}})
\right),$
\\
11. & \hspace*{2em} $y^{\text{(prev)}}:=y^{\text{(next)}}$. 
\\
12. & end for
\\
13. & Return solution $y^{n+1}:=y^{\text{(next)}}$.
\\\hline
\end{tabular}
\end{center}
\caption{A time step of the LIM scheme based on the
linearized implicit Euler scheme~\eqref{EBlin}, 
with~$\widehat{A}_n$ and~$\widehat{g}^n$ defined by
either~\eqref{A_g} or~\eqref{A_g_cs}}
\label{f:LIM}
\end{figure}

For parabolic problems
$\lambmax\sim h^{-2}$ and, hence, to integrate a parabolic problem
for $t\in[0,T]$, from the stability point of view, it is suffice to 
carry out $\sqrt{\Oo(T h^{-2})} = \Oo(\sqrt{T} h^{-1})$ Chebyshev steps.  
For the Cahn--Hilliard equation, when the value~$\epsilon$ in~\eqref{eps}
is fixed with respect to the space step~$h$,
we have $\|\widehat{A}_n\|=\Oo(h^{-4})$ and  
\begin{equation*}
p = \Oo(\sqrt{T} h^{-2}).  
\end{equation*}
In contrast, applied to the Cahn--Hilliard equation, the explicit Euler scheme
would have suffered from the CFL stability restriction 
$\tau\leqs\tau_{\max}=\Oo(\|\widehat{A}_n\|^{-1})=\Oo(h^4)$
and needed $T/\tau_{\max}=\Oo(Th^{-4})$ steps to integrate the problem
for $t\in[0,T]$.    

The construction of Chebyshev polynomials in the LIM scheme
requires that the matrix $\widehat{A}_n$ is symmetric positive
semidefinite.  Although the matrix $\widehat{A}_n$ in~\eqref{A_g}
is not symmetric, the LIM scheme technique turns out to be applicable
to our problem with~$\widehat{A}_n$ because, as shown
by Proposition~\ref{p:symm} to be proven next, $\widehat{A}_n$ can be shown
to be similar to a symmetric matrix $\tilde{A}_n$.  
(Recall that matrices $A, B\in\Rr^{N\times N}$ are called similar
if there exists a nonsingular matrix $C\in\Rr^{N\times N}$ such that
$A=CBC^{-1}$.)
Then observe that the Chebyshev matrix polynomial in $\widehat{A}_n$,
implicitly constructed by the LIM scheme, turns out to be similar
to the same Chebyshev polynomial in the symmetric matrix $\tilde{A}_n$.
This observation resolves the Chebyshev polynomial 
applicability issue only partially, as $\tilde{A}_n$ may not
be positive semidefinite.  This lack of positivity appears to be closely 
connected to the well known stability problems for the Cahn--Hilliard 
equation, see, e.g.,~\cite{VollmayrleeRutenberg2003,WiseWangLowengrub2009},
and can be solved by combining the LIM scheme 
with the Eyre convex splitting, see Section~\ref{s:LIMcs} below.
We remark that in numerical tests presented here we have not experienced 
any serious stability problems and, hence, there was no need to apply 
the convex splitting.

\begin{proposition}
\label{p:symm}
Let $A\in\Rr^{N\times N}$ defined in~\eqref{IVP} 
be a symmetric positive semidefinite matrix.
Then the matrix~$\widehat{A}_n$ defined in~\eqref{A_g} is diagonalizable,
has real eigenvalues and the number of its negative (positive) eigenvalues
does not exceed the number of negative (positive) eigenvalues of the 
matrix $J_n + \epsilon^2 A$.  Furthermore,
there exists a symmetric matrix $\tilde{A}_n$ which is similar 
the matrix~$\widehat{A}_n$.
Therefore the LIM Chebyshev matrix polynomial in~$\widehat{A}_n$ is a matrix 
similar to the same polynomial in~$\tilde{A}_n$.  
\end{proposition}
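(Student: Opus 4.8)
The plan is to exploit the fact that $\widehat{A}_n=A(J_n+\epsilon^2 A)$ is a product of the symmetric positive semidefinite matrix $A$ with the symmetric matrix $S:=J_n+\epsilon^2 A$, which is symmetric because $J_n$ in~\eqref{J} is diagonal and $A$ is symmetric. Writing $R:=A^{1/2}$ for the symmetric positive semidefinite square root of $A$, the natural candidate for the symmetric matrix in the statement is
\[
\tilde{A}_n := R\,(J_n+\epsilon^2 A)\,R = RSR,
\]
which is symmetric by construction. Everything then reduces to comparing $\widehat{A}_n=R^2 S$ with $\tilde{A}_n=RSR$.

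First I would treat the case in which $A$ is positive definite, so that $R$ is invertible. Then $\widehat{A}_n=R^2S=R(RSR)R^{-1}=R\,\tilde{A}_n\,R^{-1}$ is an explicit similarity, so $\widehat{A}_n$ is similar to the symmetric matrix $\tilde{A}_n$ and hence diagonalizable with real eigenvalues. Since $\tilde{A}_n=R^{\top}SR$ is a congruence transformation of $S$ with the nonsingular matrix $R$, Sylvester's law of inertia gives that $\tilde{A}_n$ and $S$ have exactly the same numbers of positive, negative and zero eigenvalues; similarity preserves the spectrum, so $\widehat{A}_n$ inherits this inertia, which yields the (in this case sharp) bounds on the numbers of negative and positive eigenvalues. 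The final assertion is then immediate: for any polynomial $p$ and any similarity $\widehat{A}_n=C\tilde{A}_nC^{-1}$ one has $p(\widehat{A}_n)=C\,p(\tilde{A}_n)\,C^{-1}$, so the LIM Chebyshev polynomial in $\widehat{A}_n$ is similar to the same polynomial in $\tilde{A}_n$.

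The hard part will be the genuinely semidefinite case, in which $R=A^{1/2}$ is singular and the clean conjugation $R\,\tilde{A}_n\,R^{-1}$ is no longer available. Here I would argue through the elementary fact that, with $P:=R$ and $Q:=RS$, the two products $\widehat{A}_n=PQ=R(RS)$ and $\tilde{A}_n=QP=(RS)R$ share the same nonzero eigenvalues with the same algebraic multiplicities. Since $\tilde{A}_n$ is symmetric, its eigenvalues are real, hence every nonzero eigenvalue of $\widehat{A}_n$ is real and the remaining ones are zero, so $\widehat{A}_n$ has real spectrum. For the inertia bound I would use that congruence by the rank-deficient $R$ can only decrease the counts of positive and of negative eigenvalues: the quadratic form $x^{\top}\tilde{A}_nx=(Rx)^{\top}S(Rx)$ is $S$ restricted to $\operatorname{range}(R)$, so a subspace on which it is positive (negative) definite maps injectively to one on which $S$ is positive (negative) definite. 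This explains precisely why the statement asserts ``does not exceed'' rather than equality.

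The most delicate point, and the one I expect to be the main obstacle, is establishing diagonalizability (equivalently, the existence of a symmetric matrix similar to $\widehat{A}_n$) in this singular case: transporting the eigenvectors of $\tilde{A}_n$ through $R$ produces independent eigenvectors only for the nonzero part of the spectrum, and these must still be complemented on $\ker A$ so that the eigenvalue $0$ is non-defective. I would handle this by a direct analysis of $\widehat{A}_n$ restricted to $\ker A$ and its orthogonal complement $\operatorname{range}(A)$, or, failing a clean splitting, by the perturbation $A+\delta I\succ0$ together with a limiting argument $\delta\to0^{+}$, which at least secures the real-spectrum and inertia conclusions by continuity and the congruence bound above; the degenerate kernel behaviour is where the argument genuinely requires care and possibly the extra structure of the Laplacian discretization $A$.
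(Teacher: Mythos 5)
Your treatment of the positive definite case is correct, and it coincides in mechanism with the paper's key trick: conjugation by a square root turns the product into a symmetric matrix. Your observations for singular $A$ --- real eigenvalues via the $PQ$/$QP$ argument and the one-sided inertia bound via restricting the quadratic form of $S$ to $\operatorname{range}(R)$ --- are also sound. But the part you yourself flag as the most delicate point is a genuine, unclosed gap, and it is not a corner case: here $A$ is a Neumann--Laplacian discretization, so constant vectors lie in $\ker A$ and the semidefinite situation is the one that actually occurs. Neither of your proposed repairs can close it. The perturbation $A+\delta I\succ 0$, $\delta\to 0^{+}$, cannot deliver diagonalizability, because diagonalizability is not preserved under limits (every matrix is a limit of diagonalizable matrices); as you concede, it only recovers the real-spectrum and inertia claims. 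The candidate $\tilde{A}_n=RSR$ cannot serve either: when $R$ is singular, $R^2S$ and $RSR$ need not be similar. Take $R=\begin{pmatrix}1&0\\0&0\end{pmatrix}$ and $S=\begin{pmatrix}0&1\\1&0\end{pmatrix}$; then $R^2S=\begin{pmatrix}0&1\\0&0\end{pmatrix}$ is a nonzero nilpotent matrix --- not diagonalizable, hence not similar to any symmetric matrix --- while $RSR=0$. This example shows more: the statement your ``direct analysis'' would be trying to prove, that a positive semidefinite matrix times a symmetric one is diagonalizable, is false at that level of generality, so any correct argument must lean either on structure specific to $\widehat{A}_n$ or on the precise content of the results the paper cites.

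The paper's proof takes a different route exactly at this crux. It invokes Theorem~7.6.3 and Exercise~7.6.3 of Horn and Johnson for the first three claims (diagonalizability, real eigenvalues, and the inertia inequalities for a product of a positive semidefinite and a symmetric matrix), and then --- this is the step your proposal is missing --- it uses the converse part of that theorem: a matrix that is diagonalizable with real eigenvalues admits a factorization $\widehat{A}_n=\bar{A}B$ with $\bar{A}$ symmetric positive \emph{definite} and $B$ symmetric. The square-root conjugation is then applied to $\bar{A}$ rather than to the possibly singular $A$: the matrix $\bar{A}^{-1/2}\widehat{A}_n\bar{A}^{1/2}=\bar{A}^{1/2}B\bar{A}^{1/2}$ is symmetric and similar to $\widehat{A}_n$. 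So your instinct to conjugate by a square root is right, but it has to be the square root of a positive definite factor whose existence is guaranteed only after diagonalizability with real spectrum has been established --- which is precisely the step your proposal leaves open.
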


\begin{proof}
Observe that $\widehat{A}_n$ is a product of the symmetric
positive semidefinite matrix $A$ and the symmetric matrix $J_n + \epsilon^2 A$.
Then, by Theorem~7.6.3 and Exercise~7.6.3 from~\cite{HornJohnsonI},
we have that $\widehat{A}_n$ is a diagonalizable matrix with 
real eigenvalues and can not have more positive (respectively,
negative) eigenvalues than $J_n + \epsilon^2 A$ has.  The second
part of Theorem~7.6.3 in~\cite{HornJohnsonI} states that there exist
a symmetric positive definite matrix $\bar{A}$ and a
symmetric matrix $B$ such that $\widehat{A}_n=\bar{A}B$.
Note that $\widehat{A}_n$ is similar to a symmetric matrix
$$
\bar{A}^{-1/2}\widehat{A}_n\bar{A}^{1/2} = 
\bar{A}^{-1/2}\bar{A}B\bar{A}^{1/2} = \bar{A}^{1/2}B\bar{A}^{1/2}, 
$$
which is the sought after matrix~$\tilde{A}_n$.  
\end{proof}

As shown in~\cite[Theorem~3]{Zhukov2011}, for linear IVPs
$y'(t)=-\widehat{A}_ny(t) + \widehat{g}^n$, $y(0)=y^n$, 
the LIM scheme has accuracy order~1.  Since linearization~\eqref{lin}
of~\eqref{IVP} introduces an error $\Oo(\|y^{n+1} - y^n\|^2)$
to the system,
it is straightforward to conclude that the presented LIM scheme
has accuracy order~1.  For further details on the LIM scheme
we refer to~\cite{Zhukov2011,BotchevFahurdinovSavenkov2024,BotchevZhukov2025}.

\subsection{The LIM scheme combined with the Eyre convex splitting}
\label{s:LIMcs}
To avoid stability problems which may occur when implicit time integration 
schemes are applied to the Cahn--Hilliard equation with large time step
sizes, the schemes are usually applied in combination with the well known
convex splitting proposed by Eyre~\cite{Eyre1998,Eyre1998a}.  
One popular combination of the implicit Euler scheme with convex
splitting is the non-linearly stabilized splitting (NLSS) scheme~\cite{Lee_ea2013}
which reads
\begin{equation}
\label{EBcs}
\frac{y^{n+1} - y^n}{\tau} = -A 
\left(F'(y^{n+1}) + \epsilon^2 Ay^{n+1} + y^{n+1} - y^n \right).  
\end{equation}
Linearizing here $F'(y^{n+1})$ as in~\eqref{lin}, we arrive at a scheme
which can be seen as an implicit Euler step~\eqref{EBlin} with
\begin{equation}
\label{A_g_cs}
\widehat{A}_n = A \left( J_n + \epsilon^2 A + I\right), \quad
\widehat{g}^n = A\left( J_n y^n + y^n - F'(y^n) \right).  
\end{equation}
Comparing this $\widehat{A}_n$ to the one defined
in~\eqref{A_g}, we see that the convex splitting introduces an additional
identity matrix term $I$ to the matrix $\widehat{A}_n$.
Note that this linearized implicit Euler scheme~\eqref{EBlin},\eqref{A_g_cs}
scheme is different from the linearly stabilized splitting (LSS) 
scheme~\cite{Lee_ea2013}, which has also been shown to
be successful in combination with LIM for the Cahn--Hilliard
equation~\cite{BotchevFahurdinovSavenkov2024}.

A combination of the LIM scheme with the convex splitting is 
obtained by replacing relation~\eqref{A_g} with~\eqref{A_g_cs}.
The following simple result holds, which guarantees applicability
of the LIM scheme and solvability of the linearized implicit 
Euler scheme~\eqref{EBlin},\eqref{A_g_cs}.

\begin{proposition}
\label{p:stab} 
Let $A\in\Rr^{N\times N}$ defined in~\eqref{IVP} 
be a symmetric positive semidefinite 
matrix.
Then the 
matrix~$\widehat{A}_n$ defined in~\eqref{A_g_cs} is diagonalizable
and has real nonnegative eigenvalues.  Furthermore,
there exists a symmetric positive semidefinite matrix $\tilde{A}_n$ which 
is similar to the matrix~$\widehat{A}_n$.
Therefore the LIM Chebyshev matrix polynomial in~$\widehat{A}_n$ is a matrix 
similar to the same polynomial in~$\tilde{A}_n$.  
In addition,
the matrix $I + \tau\widehat{A}_n$, $\tau>0$ is nonsingular,
so that the linearized implicit Euler scheme~\eqref{EBlin},\eqref{A_g_cs}
is solvable for any $\tau>0$.
\end{proposition}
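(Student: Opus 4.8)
The plan is to follow the proof of Proposition~\ref{p:symm} almost verbatim, the one genuinely new observation being that the convex splitting turns the middle factor into a \emph{positive semidefinite} matrix, which in turn forces the whole spectrum to be nonnegative and upgrades every conclusion accordingly.

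First I would write $\widehat{A}_n$ from~\eqref{A_g_cs} as a product $\widehat{A}_n = A M$ with $M := J_n + \epsilon^2 A + I$ and show that $M$ is symmetric positive semidefinite. The key is~\eqref{J}: since $J_n = 3\operatorname{diag}((y_1^n)^2,\dots,(y_N^n)^2) - I$, the identity term cancels and
\begin{equation*}
M = J_n + \epsilon^2 A + I = 3\operatorname{diag}\left((y_1^n)^2,\dots,(y_N^n)^2\right) + \epsilon^2 A .
\end{equation*}
This is a sum of a diagonal matrix with nonnegative entries and the symmetric positive semidefinite matrix $\epsilon^2 A$, hence symmetric positive semidefinite. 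This is precisely the advantage of~\eqref{A_g_cs} over~\eqref{A_g}, where the middle factor $J_n + \epsilon^2 A$ is only symmetric and may carry negative eigenvalues.

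Next I would invoke the same Horn--Johnson machinery as in Proposition~\ref{p:symm}, now applied to the product of the positive semidefinite $A$ and the symmetric matrix $M$. Theorem~7.6.3 together with Exercise~7.6.3 in~\cite{HornJohnsonI} give that $\widehat{A}_n = AM$ is diagonalizable with real eigenvalues and that its number of negative eigenvalues does not exceed the number of negative eigenvalues of $M$. Because $M$ is positive semidefinite it has none, so $\widehat{A}_n$ has no negative eigenvalue and its spectrum is real and nonnegative. As a cross-check one may note that the nonzero eigenvalues of $AM = A^{1/2}(A^{1/2}M)$ coincide with those of $(A^{1/2}M)A^{1/2} = A^{1/2} M A^{1/2}$, a congruence of $M$, hence nonnegative. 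For the similarity statement I would reuse the factorization $\widehat{A}_n = \bar{A} B$ from the second part of Theorem~7.6.3, with $\bar{A}$ symmetric positive definite and $B$ symmetric, and set $\tilde{A}_n := \bar{A}^{1/2} B \bar{A}^{1/2}$; this is symmetric and similar to $\widehat{A}_n$. Since $\tilde{A}_n$ is symmetric and shares the nonnegative spectrum of $\widehat{A}_n$, it is positive semidefinite, and the LIM Chebyshev polynomial in $\widehat{A}_n$ is then similar to the same polynomial in $\tilde{A}_n$ exactly as before.

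Finally, the solvability claim is immediate once nonnegativity of the spectrum is in hand: the eigenvalues of $I + \tau\widehat{A}_n$ are $1 + \tau\lambda \geqs 1 > 0$ for every eigenvalue $\lambda \geqs 0$ of $\widehat{A}_n$ and any $\tau > 0$, so $I + \tau\widehat{A}_n$ is nonsingular and the linear system $(I + \tau\widehat{A}_n) y^{n+1} = y^n + \tau\widehat{g}^n$ equivalent to~\eqref{EBlin},\eqref{A_g_cs} has a unique solution. The only point requiring care, and hence the main obstacle, is that $A$ is merely semidefinite rather than definite, so the clean similarity $\widehat{A}_n \sim A^{1/2} M A^{1/2}$ is not available directly; this is exactly what the Horn--Johnson factorization $\widehat{A}_n = \bar{A} B$ circumvents, and it is the reason I would lean on that result rather than argue the similarity by hand.
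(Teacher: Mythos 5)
Your proposal is correct and follows essentially the same route as the paper's own proof: the identity-cancellation showing $J_n+\epsilon^2A+I$ is symmetric positive semidefinite, the appeal to Theorem~7.6.3 and Exercise~7.6.3 of Horn--Johnson for diagonalizability and nonnegative real eigenvalues, the construction $\tilde{A}_n=\bar{A}^{1/2}B\bar{A}^{1/2}$ from the factorization $\widehat{A}_n=\bar{A}B$, and the observation that the eigenvalues of $I+\tau\widehat{A}_n$ are at least~1. Your explicit remark that $\tilde{A}_n$ inherits positive semidefiniteness from the shared nonnegative spectrum, and your note on why the naive similarity via $A^{1/2}$ fails when $A$ is singular, merely spell out points the paper leaves implicit.
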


\begin{proof}
Note that the matrix 
$$
J_n + \epsilon^2 A + I = 
3 \operatorname{diag} \left(\, (y_1^n)^2, \, \dots,\, (y_N^n)^2\right)  
+ \epsilon^2 A
$$  
is symmetric positive semidefinite as it is a sum of 
a symmetric positive semidefinite matrix and a diagonal matrix with
nonnegative entries.  
Then observe that $\widehat{A}_n$ is a product of the symmetric
positive semidefinite matrices $A$ and $J_n + \epsilon^2 A + I$.
Then Theorem~7.6.3 and Exercise~7.6.3 from~\cite{HornJohnsonI}
state that $\widehat{A}_n$ is a diagonalizable matrix with 
real nonnegative eigenvalues.  
The rest of the proof repeats the steps in the proof of
Proposition~\ref{p:symm}.
The last proposition statement, about nonsingularity of the matrix
$I + \tau\widehat{A}_n$, evidently follows from the observation that
all the eigenvalues of $I + \tau\widehat{A}_n$ are real and greater than
or equal to~1.
\end{proof}

\begin{remark}
It is easy to see that Propositions~\ref{p:symm} and~\ref{p:stab} remain valid 
in the case of a nonconstant mobility~$M(c)$, provided that the matrix $A_M(y^n)$, 
defined in Remark~\ref{rmkM}, is symmetric positive semidefinite.   
\end{remark}

\subsection{Adaptive time step size selection in the LIM scheme}
\label{s:LIMadapt}
Here we adopt the time step selection procedure proposed and successfully 
tested for the LIM scheme in~\cite{BotchevZhukov2024}.
It is based on an aposteriori error estimation obtained with
the predictor-corrector approach.
More specifically, let $y^{n+1}$ be the LIM scheme solution and
consider the following predictor-corrector (PC) time stepping, with
the LIM and the implicit trapezoidal (ITR) schemes taken as predictor
and corrector, respectively: 
\begin{equation}
\label{LIMest}
\begin{aligned}
y_{\text{PC}}^{n+1} &= y^n - \frac{\tau}2 A 
\left( F'(y^n) + F'(y^{n+1}) + \epsilon^2 A (y^n+y^{n+1}) \right),
\\
\texttt{est}_{n+1} &= \frac{\|y^{n+1} - y_{\text{PC}}^{n+1}\|}{\|y_{\text{PC}}^{n+1}\|}.
\end{aligned}
\end{equation}
Since LIM has accuracy order~1, one less than the ITR accuracy order~2, 
their PC combination preserves the accuracy order of the corrector, i.e., 
has order~2 (see, e.g.,~\cite{ODE1(HNW)}).
Therefore, the local error estimate should be $\texttt{est}_{n+1}=\Oo(\tau^2)$ 
for sufficiently small $\tau$.  In practice, for realistic $\tau$ values
we observe $\texttt{est}_{n+1}=\Oo(\tau)$ and, hence, choose
a time step size for the next time step as 
\begin{equation}
\label{LIMdt} 
\tau_{\text{new}} :=
\frac{\tol}{\texttt{est}_{n+1}}\,\tau,
\end{equation}
where $\tol$ is a given tolerance value.
The time step adjustment is carried out aposteriori, once a time step is done.
The resulting algorithm for the LIM time integration scheme with
the adaptive time step selection is given in Figure~\ref{f:LIMadapt}.
Another adaptive time stepping approach for the LIM scheme
is recently proposed in~\cite{FeodoritovaNovikovaZhukov2025}.

\begin{figure}
\begin{center}
\begin{tabular}{rl}
\hline\hline
\multicolumn{2}{l}{The LIM adaptive time stepping scheme
for Cahn--Hilliard equation~\eqref{IVP}}
\\[2ex]
\multicolumn{2}{l}{\textbf{Given:} 
initial time step size $\tau_0>0$, final time $T$,
$A\in\Rr^{N\times N}$, $y^0\in\Rr^N$, tolerance~\tol.}
\\
\multicolumn{2}{l}{\textbf{Output:} 
numerical solution $y_{\text{fin}}$ at time~$T$.}
\\\hline
1.  & Set $t:=0$, $\tau:=\tau_0$, $y^n:=y^0$ \\
2.  & While $t<T$ do \\
3.  & \hspace*{2em} if $t+\tau > T$ \\
4.  & \hspace*{4em}    $\tau:= T-t$ \\
5.  & \hspace*{2em} end if          \\
6.  & \hspace*{2em} Compute $\widehat{A}_n$ and $\widehat{g}^n$ 
                    by relation~\eqref{A_g} or, if the Eyre splitting is used,
                    by~\eqref{A_g_cs}\\
7.  & \hspace*{2em} Compute $y^{n+1}$ by making the LIM step (Figure~\ref{f:LIM}), 
                    with $\tau$, $\widehat{A}_n$, $\widehat{g}^n$, $y^n$ \\
8.  & \hspace*{2em} Compute $y_{\text{PC}}^{n+1}$ and $\texttt{est}_{n+1}$ 
                    by relation~\eqref{LIMest}\\
9.  & \hspace*{2em} Set $y^{n}:= y^{n+1}$, $t:=t + \tau$ \\
10. & \hspace*{2em} Set $\tau:=\tau_{\text{new}}$ by relation~\eqref{LIMdt}\\
11. & end while\\ 
12. & Return solution $y_{\text{fin}}:=y^{n+1}$
\\\hline
\end{tabular}
\end{center}
\caption{The LIM adaptive time stepping scheme}
\label{f:LIMadapt}
\end{figure}

\section{EE2 (exponential Euler order 2) time integration scheme}
\label{s:EE}
\subsection{EE2 scheme, Krylov subspace residual and restarting}
\label{s:EERT}
For $y^n\approx y(t_n)$ being a numerical solution 
of the space discretized Cahn--Hilliard IVP~\eqref{IVP} at time $t_n$,
consider a linearization of $F'(y(t))$ in IVP~\eqref{IVP} similar to~\eqref{lin}:
\begin{equation}
\label{linEE}
F'(y(t)) \approx F'(y^n) + J_n(y(t) - y^n),
\end{equation}
with Jacobian $J_n$ defined in~\eqref{J}.
Substituting this linearization into~\eqref{IVP}, we obtain
a linear IVP, which can be solved for $t\in[t_n,t_n+\tau]$, 
to obtain an approximate solution $y^{n+1}$ at $t_{n+1}=t_n+\tau$:
\begin{align}
\label{IVPlin}
& \text{solve}\quad\left\{
\begin{aligned}
y'(t) &= -\widehat{A}_n y(t) + \widehat{g}^n,
\quad t\in[t_n,t_n+\tau],
\\
y(t_n) &= y^n,
\end{aligned}\right.
\\
\label{EE2}
& \text{set}\quad y^{n+1}:=y(t_n+\tau),
\end{align}
where $\widehat{A}_n$ and $\widehat{g}^n$ are given by~\eqref{A_g}.
We call this time integration scheme exponential Euler scheme of 
order~2 (EE2). 
In the EE2 scheme, solution $y(t)$ of~\eqref{IVPlin} is computed by an action
of a matrix function~$\varphi$ defined as
\begin{equation}
\label{phi}
\varphi (z) := \frac{e^z - 1}{z}, \quad z\in\Cc,
\quad \varphi(0):=1.  
\end{equation}
More precisely, solution of IVP~\eqref{linEE} can be written as
a matrix-vector product of the matrix function~$\varphi$
with vector $\widehat{g}^n  - \widehat{A}_n y^n$, 
\begin{equation}
\label{yt}
y(t_n+t) = y^n + t \varphi( -t\widehat{A}_n ) 
\left( \widehat{g}^n  - \widehat{A}_n y^n\right), \quad t\geqs 0,
\end{equation}
and we solve IVP~\eqref{linEE} by evaluating this matrix-vector
product with a Krylov subspace method, see, 
e.g.,~\cite{DruskinKnizh89,GallSaad,HochLub97}.
In particular, we use the residual-time (RT) restarted Krylov
subspace method proposed in~\cite{ART,BKT21}.  An implementation 
of the method is available at \url{https://team.kiam.ru/botchev/expm/}.
A well known, more sophisticated implementation of the Krylov subspace 
method for evaluating the $\varphi$ matrix function is~\cite{EXPOKIT}.

In the Krylov subspace method the Arnoldi process is employed.
For the starting vector $v_1:=\widehat{g}^n  - \widehat{A}_n y^n$,
$v_1:=v_1/\|v_1\|$,
the Arnoldi process computes, after $m$ steps, a matrix
$V_{m+1}\in\Rr^{N\times(m+1)}$,
with orthonormal columns $v_1$, \dots, $v_{m+1}$,  
and an upper Hessenberg matrix $H_{m+1,m}\in\Rr^{(m+1)\times m}$ 
such that (see, e.g.,~\cite{SaadBook,vdVBook})
\begin{equation}
\label{Arn}
\widehat{A}_n V_m = V_{m+1}H_{m+1,m} \quad \text{or}\quad
\widehat{A}_n V_m = V_mH_{m,m} + h_{m+1,m}v_{m+1}e_m^T,
\end{equation}
where $H_{m,m}\in\Rr^{m\times m}$ is formed by the first $m$~rows of $H_{m+1,m}$
and $e_m=(0,\dots,0,1)^T\in\Rr^m$.
Then the Krylov subspace solution $y_m(t_n+t)$,
an approximation to $y(t_n+t)$ in~\eqref{yt}, is computed as
\begin{equation}
\label{yt_m}   
          y(t_n+t) = y^n + t \varphi( -t\widehat{A}_n )V_m(\beta e_1) 
\approx y_m(t_n+t) = y^n + t V_m \varphi( -t H_{m,m} )(\beta e_1) ,
\end{equation}
where $\beta=\|v_1\|=\|\widehat{g}^n  - \widehat{A}_n y^n\|$ and 
$e_1=(1,0,\dots,0)^T\in\Rr^m$.
Since we keep $m\ll N$, 
the matrix function $\varphi( -t H_{m,m} )$
is computed by a direct method, see, e.g.,~\cite{Higham_bookFM}.
The quality of the approximation $y(t_n+t)\approx y_m(t_n+t)$ 
can be easily monitored as follows. Define a residual $r_m(t_n+t)$
of $y_m(t_n+t)$ with respect to the IVP~\eqref{IVPlin} as
\begin{equation}
\label{rm}
r_m(t_n+ t) := -\widehat{A}_n y_m(t_n+t) + \widehat{g}^n - y_m'(t_n+t), 
\quad t\geqs 0.
\end{equation}
Using~\eqref{Arn} it is not difficult to check 
that~\cite{CelledoniMoret97,DruskinGreenbaumKnizhnerman98,BKT21}
$$
\begin{aligned}
r_m(t_n+ t) &= -h_{m+1,m}v_{m+1}e_m^T t \varphi( -t H_{m,m} )(\beta e_1),
\\
\|r_m(t_n+ t)\| &= \beta h_{m+1,m} t | e_m^T \varphi( -t H_{m,m} ) e_1|.
\end{aligned}
$$

Note that in this algorithm $m+1$ columns of $V_m$ need to be handled and stored.
To keep work and memory costs in the Krylov subspace methods restricted,
the so-called restarting can be applied.  One efficient way to restart is 
to use the property of $r_m(t_n + t)$ that its norm is a monotonically
nondecreasing function of time~\cite{ART,BKT21}.  Hence,
we can choose $\tau>0$  such that $\|r_m(t_n + \tau)\|$ is sufficiently
small, advance solution of~\eqref{IVPlin} by $\tau$ and restart
the Krylov method at $t_n+\tau$.
This strategy, called residual-time (RT) restarting, 
leads to an efficient solver for linear problems~\eqref{IVPlin}~\cite{BKT21}.  
However, here we solve a nonlinear
problem and, once a time advance for~\eqref{IVPlin} by $\tau$ is made,
it does not make much sense to proceed in time with the same
linear IVP.  It is more sensible, once the Krylov subspace method 
advanced by~$\tau$ for which the residual $r_m(t_n+ \tau)$ 
is small enough in norm, accept $\tau$ as a new time step size
for the nonlinear IVP~\eqref{IVP}:
$$
t_{n+1}:=t_n+\tau, \quad y^{n+1}:=y_m(t_n + \tau).
$$
This can be seen as a nonlinear variant of the RT restarting.
More precisely, we choose $\tau$ such that
\begin{equation}
\label{phi_stop}
\frac{\|r(t_n+\tau)\|}{\beta}
\leqs \tol_\varphi,
\quad \beta = \|\widehat{g}^n  - \widehat{A}_n y^n\|,
\end{equation}
where $\tol_\varphi$ is a chosen tolerance.
The Krylov subspace method for solving the linear IVP~\eqref{IVPlin}
and finding an appropriate~$\tau$ is outlined in Figure~\ref{f:RT}.
Note that the value \texttt{resnorm} computed in the algorithm is 
$\|r(t_n+\tau)\|/\beta$, the left hand side of 
the inequality~\eqref{phi_stop}. 

\begin{figure}
\begin{center}
\begin{tabular}{rl}
\hline\hline
\multicolumn{2}{l}{Krylov subspace method (based on the Arnoldi process)}\\
\multicolumn{2}{l}{to compute $y(t_n+\tau) = y^n +
  \tau\varphi( -\tau\widehat{A}_n ) 
  \left( \widehat{g}^n  - \widehat{A}_n y^n\right)$}
\\[2ex]
\multicolumn{2}{l}{\textbf{Given:} 
initial step size $\tau_0>0$, 
$\widehat{A}_n=\widehat{A}_n^T\in\Rr^{N\times N}$,
$y^n,\widehat{g}^n\in\Rr^N$,}\\
\multicolumn{2}{l}{\hspace*{7.4ex}tolerance $\tol_\varphi$, 
                   Krylov dimension $\mmax$.}
\\
\multicolumn{2}{l}{\textbf{Output:} 
$\tau>0$ and numerical solution $y^{n+1}=y(t_n+\tau)$ such 
that~\eqref{phi_stop} holds.}
\\\hline
1.  & Initialize zero matrices $H=(h_{ij})\in\Rr^{(\mmax+1)\times \mmax}$ and \\
    & $V\in\Rr^{N\times (\mmax+1)}$ with columns $v_1$, \dots, $v_{\mmax+1}$
\\      
2.  & $v_1 := \widehat{g}^n  - \widehat{A}_n y^n$, $\beta:=\|v_1\|$, 
     $v_1:=v_1/\beta$  \\
3.  & For $j=1,\dots,\mmax$ \\
4.  & \hspace*{2em} $w:=\widehat{A}_n v_j$\\
5.  & \hspace*{2em} For $i=1,\dots,j$\\
6.  & \hspace*{4em} $h_{ij} := w^Tv_i$ \\
7.  & \hspace*{4em} $w := w - h_{ij}v_i$ \\
8.  & \hspace*{2em} end for\\
9.  & \hspace*{2em} $h_{j+1,j} := \|w\|$\\
10. & \hspace*{2em} $e_1:=(1,0,\dots,0)^T\in\Rr^j$, $e_j:=(0,\dots,0,1)^T\in\Rr^j$\\
11. & \hspace*{2em} $s:=\tau_0$, $u:= s\varphi(-s H_{j,j})e_1$\\
12. & \hspace*{2em} $\texttt{resnorm}:=h_{j+1,j}|e_j^Tu|$\\
13. & \hspace*{2em} if $\texttt{resnorm}\leqs \tol_\varphi$\\
14. & \hspace*{4em} convergence: set $\tau:=\tau_0$, break (leave the for loop)\\
15. & \hspace*{2em} elseif $j=\mmax$\\
16. & \hspace*{4em} trace \texttt{resnorm} (lines 11,12) for $s\in[0,\tau_0]$ and set\\
17. & \hspace*{4em} $\tau:=\max\{ s\in[0,\tau_0] \; : \;\; \text{\texttt{resnorm}}\leqs \tol_\varphi\}$\\
18. & \hspace*{4em} $s:=\tau$, $u:= s\varphi(-s H_{j,j})e_1$\\
19. & \hspace*{4em} break (leave the for loop)\\
20. & \hspace*{2em} end if\\
21. & \hspace*{2em} $v_{j+1} := w/h_{j+1,j}$\\
22. & end for\\
23. & Return $\tau$ and $y^{n+1} := y^n + V_j (\beta u)$
\\\hline
\end{tabular}
\end{center}
\caption{Krylov subspace method to make an EE2 time step 
by solving~\eqref{IVPlin}, with~$\widehat{A}_n$ and~$\widehat{g}^n$ defined by
either~\eqref{A_g} or~\eqref{A_g_cs}.
Here $H_{j,j}\in\Rr^{j\times j}$ is formed by the first $j$ rows and columns 
of $H_{m,m}$ and $V_j\in\Rr^{N\times j}$ by the first $j$ columns of $V_m$.}
\label{f:RT}
\end{figure}

This nonlinear RT procedure, where $\tau$ is chosen for a fixed Krylov
subspace dimension~$\mmax$ such that the residual~\eqref{rm} is
small enough in norm, is well suited for an adaptive time stepping.
If, for a certain reason, the Cahn--Hilliard system~\eqref{IVP} 
has to be integrated with a constant time step size~$\tau$ then we can
apply the standard linear RT restarting, see~\cite{ART,BKT21} and 
its implementation available 
at \url{https://team.kiam.ru/botchev/expm/}.
For matrices~$\widehat{A}_n$ whose numerical range has elements
with nonnegative real part, the RT restarting guarantees that~\eqref{IVPlin} 
is solved on the whole time interval 
$[t_n,t_n+\tau]$ for any~$\tau$ and~$\mmax$.
However, for the Cahn--Hilliard equation the matrices~$\widehat{A}_n$ 
may have numerical range with small negative elements, leading to
an accuracy corruption in the RT restarting.  Therefore, in a constant time step
setting, the Krylov subspace dimension $\mmax$ should not be taken too small 
and $\tau$ should not be taken too large.   

\subsection{Accuracy order of EE2}
\label{s:accEE}
The following result shows that the EE2 time integration scheme has 
a nonstiff accuracy order~2.

\begin{proposition}
\label{p:ord2}
Let the EE2~\eqref{IVPlin},\eqref{EE2} scheme be applied  
to solve the space discretized Cahn--Hilliard IVP problem~\eqref{IVP}
and let the exact solution~$y(t)$ of IVP~\eqref{IVP} be Lipschitz 
continuous.
Then for the local error $e^n$ of the EE2 scheme at time step~$n$ holds
\begin{equation}
\label{ord2}
e^n = \Oo (\tau^3),  
\end{equation}
where the leading term is proportional to~$\|A\|$, with $-A$ 
being the discretized Laplacian defined in~\eqref{IVP}. 
\end{proposition}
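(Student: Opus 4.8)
The plan is to carry out a standard local-error analysis: I assume the step starts from the exact value, $y^n = y(t_n)$, and I compare the Taylor expansions in $\tau$ of the exact solution $y(t_n+\tau)$ and of the EE2 solution $y^{n+1}$, which by~\eqref{EE2} is the exact solution $\tilde{y}(t_n+\tau)$ of the linearized IVP~\eqref{IVPlin}. Writing $f(y) := -A\bigl(F'(y) + \epsilon^2 A y\bigr)$ for the right-hand side of~\eqref{IVP} and $\hat{f}(y) := -\widehat{A}_n y + \widehat{g}^n$ for its linearization, I would expand both solutions about $t_n$ (taking $t_n=0$ without loss of generality) and subtract term by term.

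First I would record the two structural identities that make the linearization second-order accurate. A direct computation from~\eqref{A_g} and~\eqref{J} shows that the constant and linear parts of $\hat{f}$ are chosen precisely so that $\hat{f}(y^n) = -\epsilon^2 A^2 y^n - A F'(y^n) = f(y^n)$ and $D\hat{f} \equiv -\widehat{A}_n = Df(y^n)$; that is, the affine map $\hat{f}$ matches both the value and the Jacobian of $f$ at $y^n$. Since $\tilde{y}$ solves a linear ODE, its derivatives are simply $\tilde{y}^{(k)}(0) = (-\widehat{A}_n)^{k-1}\hat{f}(y^n)$, whereas the chain rule gives $y'(0) = f(y^n)$, $y''(0) = Df(y^n)\,f(y^n)$ and $y'''(0) = D^2 f(y^n)[f(y^n),f(y^n)] + (Df(y^n))^2 f(y^n)$. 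The two identities force $\tilde{y}(0)=y(0)$, $\tilde{y}'(0)=y'(0)$ and $\tilde{y}''(0)=y''(0)$, so every term up to order $\tau^2$ cancels and the local error opens at order $\tau^3$.

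Next I would isolate the leading term. Since $Df(y^n)=-\widehat{A}_n$, the second-derivative contributions to $y'''(0)$ and $\tilde{y}'''(0)$ coincide, both equalling $\widehat{A}_n^2 f(y^n)$, and only the genuinely nonlinear part survives:
\[
e^n = y(t_n+\tau) - y^{n+1}
= \frac{\tau^3}{6}\,D^2 f(y^n)\bigl[f(y^n),f(y^n)\bigr] + \Oo(\tau^4).
\]
Because the only nonlinearity in $f$ is the componentwise term $-A\,F'(y)$, the bilinear second derivative is explicitly $D^2 f(y)[u,v] = -A\,\bigl(F'''(y)\odot u\odot v\bigr)$ with $F'''(c)=6c$ and $\odot$ the componentwise product; crucially it carries exactly one factor of the matrix $A$. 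Setting $u=v=f(y^n)=y'(t_n)$ and invoking the assumed Lipschitz continuity of the exact solution, I would bound $\|f(y^n)\| = \|y'(t_n)\|$ by a constant independent of the discretization, so that the vector $F'''(y^n)\odot y'(t_n)\odot y'(t_n)$ is $\Oo(1)$ and the leading error term equals $-\tfrac{\tau^3}{6}A\,\bigl(F'''(y^n)\odot y'(t_n)\odot y'(t_n)\bigr)$, whose norm is $\Oo(\tau^3)$ with a constant proportional to $\|A\|$, exactly as claimed.

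I expect the crux to lie not in the Taylor bookkeeping but in pinning down this single power of $\|A\|$. Each factor $f(y^n)$ contains, through $-\epsilon^2 A^2 y^n - A F'(y^n)$, powers of $A$ that would naively inflate the leading term to $\Oo(\|A\|^3)$; the point --- and the reason the Lipschitz hypothesis on $y(t)$ is imposed --- is that $f(y^n)$ is nothing but the time derivative $y'(t_n)$ of the exact solution and is therefore $\Oo(1)$, leaving only the outer factor in $D^2 f = -A\,D^2 F'$. Articulating this cleanly, rather than the routine differentiation, is the main obstacle.
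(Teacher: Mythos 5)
Your proof is correct, but it takes a genuinely different route from the paper's. You argue by matching Taylor coefficients in $\tau$: the two structural identities $\hat f(y^n)=f(y^n)$ and $D\hat f=-\widehat{A}_n=Df(y^n)$ force agreement of the exact and linearized solutions through order $\tau^2$, and the surviving third-order term $\frac{\tau^3}{6}D^2f(y^n)\bigl[f(y^n),f(y^n)\bigr]=-\tau^3 A\bigl(y^n\odot y'(t_n)\odot y'(t_n)\bigr)$ carries exactly one factor of $A$, with $f(y^n)=y'(t_n)$ bounded via the Lipschitz hypothesis. The paper instead derives an ODE for the error $e(t)=y(t)-\tilde y(t)$, namely $e'(t)=-\widehat{A}_n e(t)+g(y(t))$ with $e(t_n)=0$, where the defect $g(y(t))=-A\left[F'(y(t))-F'(y(t_n))-J_n(y(t)-y(t_n))\right]$ is precisely the linearization remainder of $F'$ along the exact solution; it then applies the variation-of-constants formula and the semigroup bound $\|\exp(-t\widehat{A}_n)\|\leqs e^{-\omega t}$ to get $\|e(t)\|\leqs\tau\varphi(-\omega\tau)\max_s\|g(y(s))\|$, and bounds $g=\|A\|\,\Oo(\tau^2)$ by the same Taylor-of-$F'$-plus-Lipschitz argument you use. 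The underlying cancellation is identical (your two identities are exactly what makes the paper's defect quadratic in $y(t)-y(t_n)$), but the approaches buy different things. Yours exhibits the leading error term explicitly and is the shortest path to the stated nonstiff claim. The paper's bounds the \emph{entire} error, not just the $\tau^3$ coefficient, with constants depending only on the Lipschitz constant, $\max_i|y_i^n|$ and $\omega$; this matters because your $\Oo(\tau^4)$ remainder hides derivatives such as $(Df)^3f=-\widehat{A}_n^3\,y'(t_n)$ whose size grows like $\|\widehat{A}_n\|^3$, so your expansion is informative only when $\tau\|\widehat{A}_n\|\ll 1$, whereas the paper's estimate remains meaningful (up to the factor $\varphi(-\omega\tau)$, which is at most~$1$ for $\omega\geqs 0$) in the stiff regime $\tau\|\widehat{A}_n\|\gg 1$ where EE2 is actually deployed. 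Since Proposition~\ref{p:ord2} claims only the nonstiff order with leading term proportional to $\|A\|$, your argument does establish the stated result.
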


\begin{proof}
Let $y(t)$ and $\tilde{y}(t)$ be solutions of~\eqref{IVP} and~\eqref{IVPlin},
respectively.
To get an expression for the local error $e^{n+1}$ we assume, by
definition of the local error, that solution $y^n$ at time step~$n$
is exact, $y^n=y(t_n)$.  Let $e(t)\equiv y(t)-\tilde{y}(t)$, $t\geqs t_n$, 
so that $e^{n+1}=e(t_{n+1})$.  Then we have
\begin{align*}
e'(t) &= y'(t)-\tilde{y}'(t) = -AF'(y) - \epsilon^2A^2 y(t) 
+\widehat{A}_n \tilde{y}(t) - \widehat{g}^n\\
&= -AF'(y) - \epsilon^2A^2 y(t) 
+AJ_n\tilde{y}(t) +\epsilon^2A^2\tilde{y}(t) - AJ_ny^n +AF'(y^n)\\
&= -\epsilon^2A^2 e(t) -A\left[ F'(y)-F'(y^n) - J_n(\tilde{y}(t)-y^n)\right]\\
&= -\epsilon^2A^2 e(t) -A\left[ F'(y)-F'(y^n) - 
   J_n(\tilde{y}(t)-y(t)+y(t)-y^n)\right]\\
&= -\epsilon^2A^2 e(t) -AJ_n(y(t)-\tilde{y}(t)) -A\left[ F'(y)-F'(y^n) - 
   J_n(y(t)-y^n)\right]\\
&= -A (J_n + \epsilon^2A) e(t)-A\left[ F'(y)-F'(y^n) - J_n(y(t)-y^n)\right], \\
\end{align*}
so that $e(t)$ satisfies IVP
$$
e'(t)= -\widehat{A}_n e(t) + g(y(t)),
\quad t\geqs t_n, \quad
e(t_n) = 0,
$$
with
$$
g(y(t))=-A\left[ F'(y(t))-F'(y(t_n)) - J_n(y(t)-y(t_n))\right].
$$
Applying the variation-of-constants formula (see,
e.g.,~\cite[Section~I.2.3]{HundsdorferVerwer:book} or
\cite[relation~(1.5)]{HochbruckOstermann2010})
\begin{equation*}
\label{VOC}
e(t) = \exp(-t\widehat{A}_n)e(t_n) + \int_{t_n}^t \exp(-(t-s)\widehat{A}_n)
g(y(s)) d s,
\quad t\geqs t_n,
\end{equation*}
with $\exp$ being the matrix exponential,
and taking into account that $e(t_n)=0$, we can write, 
for $t_n\leqs t\leqs t_n+\tau$,
\begin{align*}
\|e(t)\| &\leqs \max_{s\in[t_n,t]}\| g(y(s))\|
\int_{t_n}^t \|\exp(-(t-s)\widehat{A}_n) \|d s
\leqs 
\max_{s\in[t_n,t]}\| g(y(s))\|\int_{t_n}^t e^{-\omega(t-s)} d s
\\ 
&= \max_{s\in[t_n,t]}\| g(y(s))\| (t-t_n)\varphi(-\omega (t-t_n))
\leqs \max_{s\in[t_n,t]}\| g(y(s))\| \tau\varphi(-\omega \tau),
\end{align*}
where $\omega\in\Rr$ is a constant such that for $t\geqs 0$
it holds $\|\exp(-t\widehat{A}_n)\|\leqs e^{-\omega t}$, 
see, e.g.,~\cite[Theorem~I.2.4]{HundsdorferVerwer:book}
or~\cite[relation~(2.6)]{HochbruckOstermann2010}.

Note that 
$$
F'(y(t))-F'(y(t_n)) - J_n(y(t)-y(t_n)) = D_n^{(2)} ((y(t)-y(t_n))^2
+\Oo(\|y(t)-y(t_n)\|^3),
$$
where $D_n^{(2)}$ is a diagonal matrix with the entries
$6y_i^n$, $i=1,\dots,N$, the second derivative values of $F$ at $y^n$,
and squaring in $((y(t)-y(t_n))^2$ is understood elementwise.
Therefore, taking into account that $y(t)$ is Lipschitz
continuous and, hence, $\Oo(\|y(t)-y(t_n)\|^2=\Oo(\tau^2)$, 
we obtain 
$$
g(y(s)) = \|A\| \Oo(\tau^2), \quad s\in[t_n,t_n+\tau].
$$ 
Substituting the last relation into the estimate for~$\|e(t)\|$ above
and taking into account that $e^{n+1}=e(t_{n+1})$ concludes the proof.
\end{proof}

Thus, the EE2 scheme has accuracy order~2, one lower than the
local error.
Since $-A$ is the discretized Laplacian, we may expect   
$\|A\|\sim h^{-2}$.  Therefore, as the leading term in the 
estimate~\eqref{ord2} is proportional to~$\|A\|$, an order
reduction for fine space grids may occur 
(see, e.g.,~\cite{HundsdorferVerwer:book} or~\cite{ODE2(HW)}).

\subsection{The EE2 scheme combined with the Eyre convex splitting}
\label{s:EEcs}
A combination of the EE2 scheme with the Eyre convex splitting can be 
obtained by considering a time continuous analogue of the 
NLSS scheme~\eqref{EBcs},
\begin{equation}
y'(t) = -A 
\left(F'(y(t)) + \epsilon^2 Ay(t) + y(t) - y^n \right),
\end{equation}
and applying linearization~\eqref{linEE}.  This results in
an Eyre-stabilized EE2 scheme~\eqref{IVPlin},\eqref{EE2}, where 
$\widehat{A}_n$ and $\widehat{g}^n$ are now given by~\eqref{A_g_cs}.

\subsection{Adaptive time step selection in the EE2 scheme}
\label{s:EEadapt}
An aposteriori time step size selection in the EE2 scheme can
be realized in the same way as for the LIM scheme.  More precisely,
once the next time step solution $y^{n+1}$ is computed by~EE2,
we substitute $y^{n+1}$ to the right hand side of the expression
for~$y_{\text{PC}}^{n+1}$ in~\eqref{LIMest} and compute $\texttt{est}_{n+1}$.
Since the EE2 scheme is second order accurate, a time step size for 
the next step is then chosen as
\begin{equation}
\label{EEdt} 
\tau_{\text{new}} := \min\left\{ \frac54 \tau,
\sqrt{\frac{\tol}{\texttt{est}_{n+1}}}\;\tau \right\},
\end{equation}
where $\tol$ is a prescribed accuracy tolerance.
The resulting adaptive EE2 time stepping scheme is presented 
in Figure~\ref{f:EEadapt}. 

\begin{figure}
\begin{center}
\begin{tabular}{rl}
\hline\hline
\multicolumn{2}{l}{The EE2 adaptive time stepping scheme for Cahn--Hilliard equation~\eqref{IVP}}
\\[2ex]
\multicolumn{2}{l}{\textbf{Given:} 
initial step size $\tau_0>0$, final time $T$,
$A\in\Rr^{N\times N}$, $y^0\in\Rr^N$,}\\
\multicolumn{2}{l}{$\phantom{\textbf{Given:}}$ 
tolerance~\tol, Krylov dimension~$\mmax$}
\\
\multicolumn{2}{l}{\textbf{Output:} 
numerical solution $y_{\text{fin}}$ at time~$T$.}
\\\hline
1.  & Set $t:=0$, $\tau:=\tau_0$, $y^n:=y^0$ \\
2.  & While $t<T$ do \\
3.  & \hspace*{2em} if $t+\tau > T$ \\
4.  & \hspace*{4em}    $\tau:= T-t$ \\
5.  & \hspace*{2em} end if          \\
6.  & \hspace*{2em} Compute $\widehat{A}_n$ and $\widehat{g}^n$ 
                    by relation~\eqref{A_g} or, if the Eyre splitting is used,
                    by~\eqref{A_g_cs}\\
7.  & \hspace*{2em} With $\tol_{\varphi}$, set by~\eqref{tol_phi}, 
                    and $\mmax$, \\
    & \hspace*{2em} apply Krylov subspace method (Figure~\ref{f:RT})
                    to obtain $\tau$ and $y^{n+1}$\\
8.  & \hspace*{2em} Compute $y_{\text{PC}}^{n+1}$ and $\texttt{est}_{n+1}$ 
                    by relation~\eqref{LIMest}\\
9.  & \hspace*{2em} Set $y^{n}:= y^{n+1}$, $t:=t + \tau$ \\
10. & \hspace*{2em} Set $\tau:=\tau_{\text{new}}$ by relation~\eqref{EEdt}\\
11. & end while\\ 
12. & Return solution $y_{\text{fin}}:=y^{n+1}$
\\\hline
\end{tabular}
\end{center}
\caption{The EE2 adaptive time stepping scheme}
\label{f:EEadapt}
\end{figure}

The tolerance $\tol_{\varphi}$ for the Krylov subspace method is chosen
at each time step~$n$ as
\begin{equation}
\label{tol_phi}  
\tol_{\varphi} := 
\max\left\{ 
\min \left\{ 
\frac{\|\widehat{g}^n\|}{10\beta} \,,\, 
\frac1{10} \,,\, 10\,\tol \right\}\; , \; 10^{-7} 
\right\},
\end{equation}
with $\beta =\|\widehat{g}^n  - \widehat{A}_n y^n\|$ and
$\tol$ being the prescribed tolerance.
Taking into account~\eqref{phi_stop}, we see that the terms 
$\|\widehat{g}^n\|/(10\beta)$ and $1/10$ here guarantee that 
the residual norm (cf.~\eqref{rm}) is an order smaller than 
$\|\widehat{g}^n\|$ and the right side norm 
$\beta$ of the linearized IVP~\eqref{IVPlin} being solved, respectively.
In addition, the term $10\,\tol$ links the Krylov subspace tolerance
$\tol_{\varphi}$ to the accuracy tolerance of the scheme~$\tol$
and the term $10^{-7}$ avoids a too stringent tolerance.

\section{Numerical experiments}
\label{s:num}
\subsection{Test setting}
The test runs described in this section are carried out in 
Octave, version 8.3.0, on a Linux PC with twenty 3.30GHz~CPUs
and 64 Gb memory.  
The test problem is adopted from~\cite[Section~3.1]{Lee_ea2013},
it is a 2D Cahn--Hilliard equation~\eqref{CH} posed in domain
$\Omega=(0,64)\times (0,64)$. The standard
second order finite difference space discretization on a uniform 
$n_x\times n_y$ grid, with nodes $(x_i,y_j)$, $x_i=64(i-1/2)/n_x$, 
$y_j=64(j-1/2)/n_y$, leads to IVP~\eqref{IVP} 
of size $N=n_xn_y$.
The problem is integrated in time for $t\in[0,T]$ with $T=1000$.
Initial solution vector~$y^0$ is set to a vector with random entries
uniformly distributed on the interval~$(-0.01,0.01)$.
In all tests the value of~$\epsilon$ is taken to be $\epsilon_4$
for the grid~$64\times 64$, see relation~\eqref{eps}. 
As discussed above, this means that
$\|\widehat{A}_n\|=\Oo(h^{-4})$ and the problem~\eqref{IVP} is stiff.

We test accuracy of the LIM and EE2 schemes by comparing their
solution $y^n$ at the final time~$t_n=T$ to a reference
solution $y_{\text{ref}}^n$ computed on the same space grid with 
a very high time accuracy.
More specifically, the error values reported in this section are
relative error norms
\begin{equation}
\label{acc_reached}  
\text{error} =\frac{\|y^n - y_{\text{ref}}^n\|}{\|y_{\text{ref}}^n\|}.
\end{equation}
Since the reference solution is computed on the same space grid,
error measured in this way can be seen as an adequate indicator
of the time error~\cite{RKC97}.
We compare computational costs in the LIM and EE2 schemes in terms of 
the number matrix-vector multiplications with the 
matrix~$\widehat{A}_n$ (cf.~\eqref{A_g},\eqref{A_g_cs}).  
This is justified provided the Krylov subspace dimension $\mmax$
is kept moderate, so that, taking into account that $\mmax\ll N$, 
the overhead in the EE2 scheme to compute the matrix~$H_{m,m}$ and 
the matrix function~$\varphi(-\tau H_{m,m})$ 
is negligible (see the algorithm in Figure~\ref{f:RT}).

\begin{figure}
\includegraphics[width=0.49\linewidth]{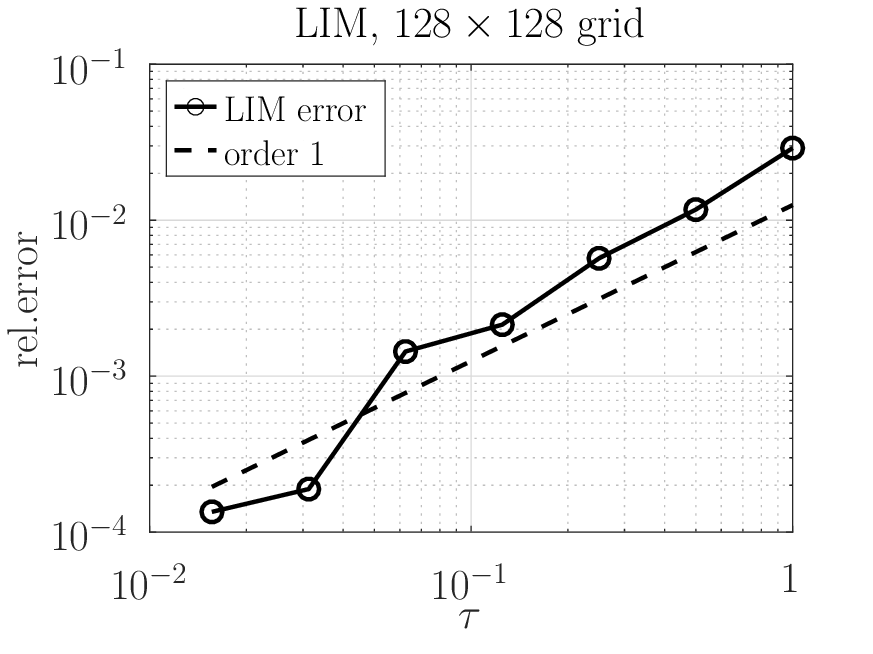}%
\includegraphics[width=0.49\linewidth]{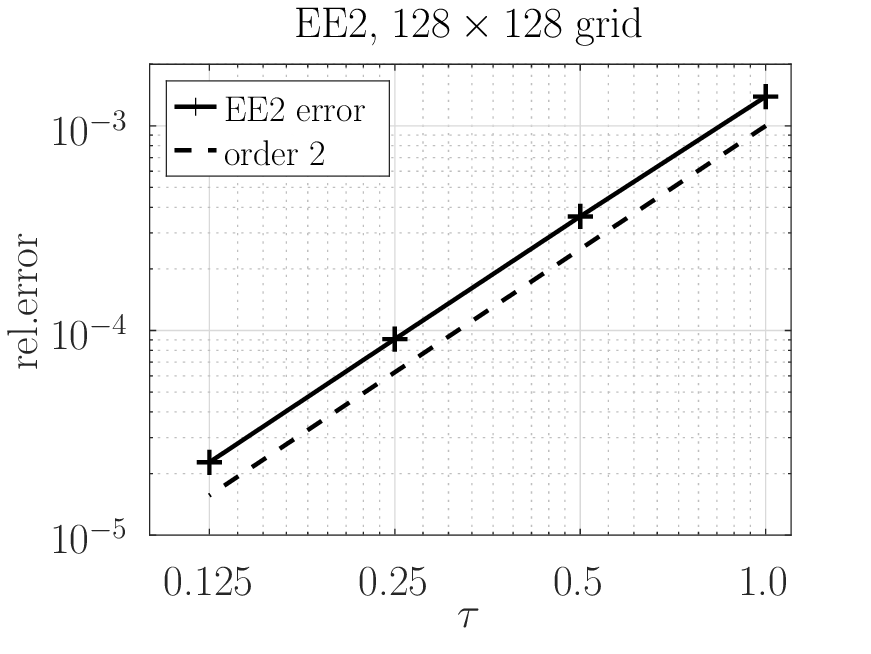}
\caption{Error convergence plots for the LIM (left) and the EE2 (right) 
schemes run with constant time step size~$\tau$ on the 
$128\times 128$ space grid.  The plots are produced with the 
data from Table~\ref{t:dt_const}.}
\label{f:conv}
\end{figure}

\subsection{The LIM and EE2 schemes with a constant time step size}
We first compare the two time integrators run with a constant time step
size.  
It turns out that, for large constant time step sizes, small negative 
eigenvalues of~$\widehat{A}_n$ may have a negative effect on the 
RT restarting procedure in the Krylov subspace methods.
This is because the RT restarting procedure is guaranteed
to work for matrices whose numerical range does not contain
elements with negative real part (which is a typical assumption in 
analysis of Krylov subspace methods)~\cite{ART,BKT21}.  
This problem can be solved 
by either reducing
the time step size at an initial evolution phase or by using
a large Krylov subspace dimension.  For simplicity of the 
experiments, we choose to set the Krylov subspace dimension
in the EE2 time integrator to a large value $\mmax=100$.
We emphasize that this is done only if EE2 is used with 
a constant time step size.  With the adaptive time stepping
strategy we take either $\mmax=10$ or $\mmax=30$.
As already discussed, taking the large value $\mmax=100$ makes 
comparing costs of both schemes in terms of the matrix-vector products 
with~$\widehat{A}_n$ unfair.  Hence, these values are given for indication
only.

\begin{table}
\caption{Achieved accuracy and the number 
of spent matvecs (matrix-vector products) for the LIM and the EE2
schemes run with constant time step size~$\tau$.  The EE2 is not 
run with $\tau=0.0625$ because a high accuracy of convergence order~2
is observed for~$\tau\geqs 0.125$.}
\label{t:dt_const}
\begin{center}
\begin{tabular}{ccccc}
\hline\hline
$\tau$  & \multicolumn{2}{c}{LIM}    & \multicolumn{2}{c}{EE2} \\
        & \# matvecs    &error\ \ \  &\ \ \ \# matvecs    &  error (\tol) \\
\hline
\multicolumn{5}{c}{$64\times 64$ grid}\\
1.0     &   12960 & {\tt9.60e-02} & 10217 & {\tt3.24e-04} ($10^{-3}$)\\
0.5     &   18000 & {\tt1.10e-03} & 24824 & {\tt1.55e-04} ($10^{-5}$)\\ 
0.25    &   27856 & {\tt2.52e-03} & 39138 & {\tt3.88e-05} ($10^{-6}$)\\
0.125   &   40000 & {\tt8.34e-04} & 63039 & {\tt9.52e-06} ($10^{-7}$)\\
0.0625  &   48000 & {\tt3.20e-04} &  ---  &   --- \\
\hline
\multicolumn{5}{c}{$128\times 128$ grid}\\
1.0     &   50770 & {\tt2.90e-02} & 41266  & {\tt1.60e-03} ($10^{-3}$)\\
0.5     &   69908 & {\tt1.17e-02} & 92650  & {\tt3.61e-04} ($10^{-5}$)\\
0.25    &   99824 & {\tt5.70e-03} & 141148 & {\tt9.08e-05} ($10^{-6}$)\\
0.125   &  136000 & {\tt2.14e-03} & 210401 & {\tt2.27e-05} ($10^{-7}$)\\
0.0625  &  207350 & {\tt1.44e-03} &  ---   &   --- \\
\hline
\end{tabular}
\end{center}
\end{table}
 
To run with a constant $\tau$, the EE2 scheme needs a tolerance 
parameter~$\tol_{\varphi}$ for the Krylov subspace evaluation
of the $\varphi$ matrix function.  Therefore, we supply EE2
with a tolerance value $\tol$, for which~$\tol_{\varphi}$ gets
its value according to~\eqref{tol_phi}.

First, in Table~\ref{t:dt_const} for both schemes we give values of the achieved
accuracy, measured as in~\eqref{acc_reached}, and required
number of matrix-vector products.  Then, in Figure~\ref{f:conv}
error convergence plots for both schemes are presented.
As we see, the EE2 scheme has indeed accuracy order~2 provided
the linearized IVP~\eqref{IVPlin} is solved at each time step
sufficiently accurately
(to achieve this, we decrease the tolerance value~\tol{} with
the time step size~$\tau$).  The LIM scheme shows, as expected,
at least an order~1 accuracy.

\begin{figure}
\includegraphics[width=0.49\linewidth]{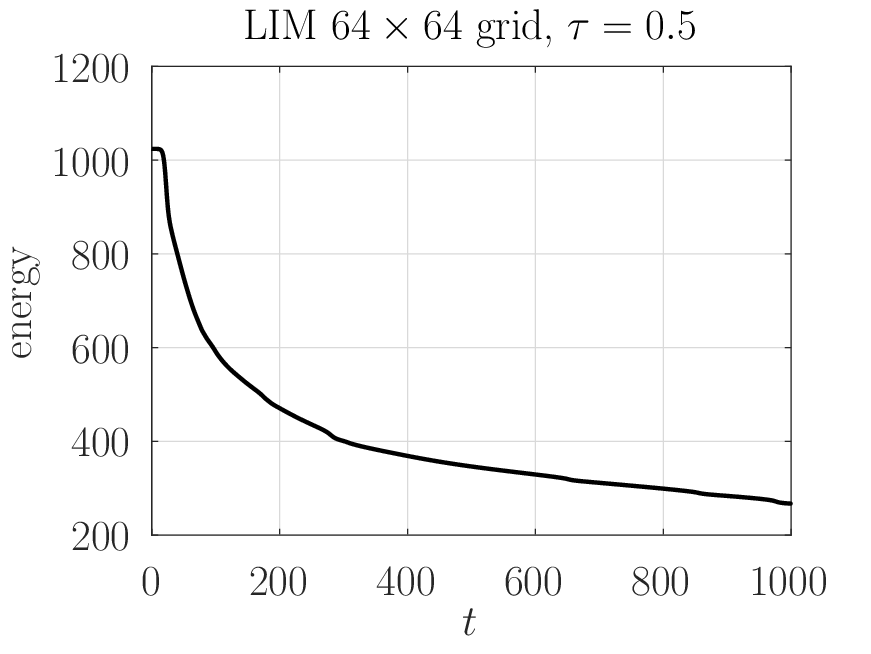}%
\includegraphics[width=0.49\linewidth]{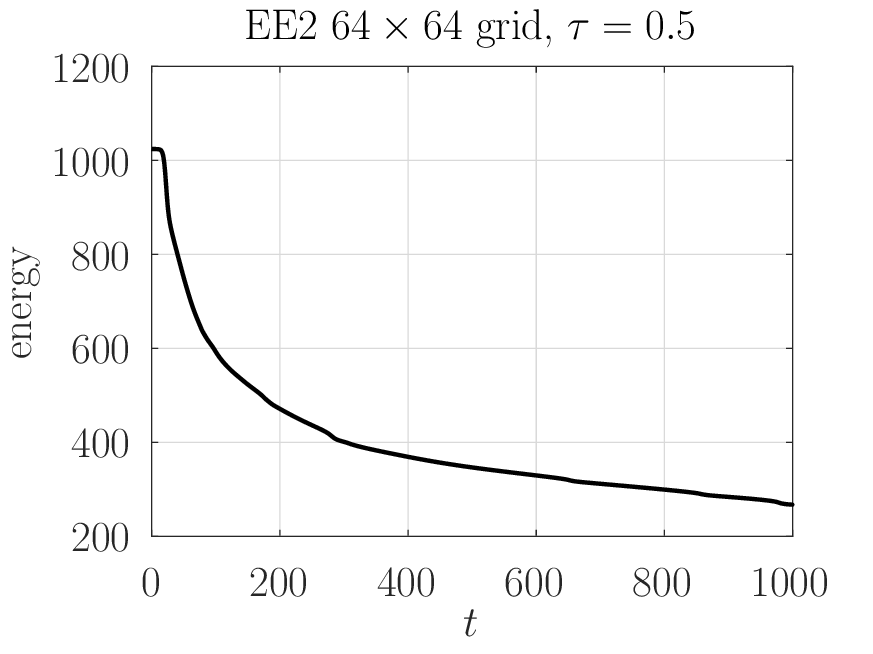}
\caption{Discrete energy~\eqref{Ediscr} versus time for the LIM (left) 
and EE2($\tol=10^{-2}$) (right) schemes run with constant $\tau=0.5$}
\label{f:Ee1}  
\end{figure}

In Figure~\ref{f:Ee1} for both schemes we show time evolution plots
of discrete energy, computed on a $n_x\times n_y$ grid 
at each time step~$n$ for $y^n=y$ as~\cite{Lee_ea2013} 
\begin{equation}
\label{Ediscr}  
\Ee_h(y) = h_xh_y\sum_{i,j=1}^{n_x,n_y} F(y_{i,j}) + \frac{\epsilon^2}2
\left(
\sum_{i=0,j=1}^{n_x,n_y}(y_{i+1,j}-y_{i,j})^2 + 
\sum_{i=1,j=0}^{n_x,n_y}(y_{i,j+1}-y_{i,j})^2
\right),
\end{equation}
with $h_{x,y}$ being the space grid sizes in the 
$x$ (respectively, $y$) direction. 
In addition, Figure~\ref{f:m1} presents plots of mass deviation
versus time in both schemes.  To produce these figures, we intentionally
take a large time step size~$\tau=0.5$ and, for~EE2, a relaxed tolerance 
value~$\tol=10^{-2}$: as we see, even in this case both schemes exhibit
good conservation properties.

\begin{figure}
\includegraphics[width=0.49\linewidth]{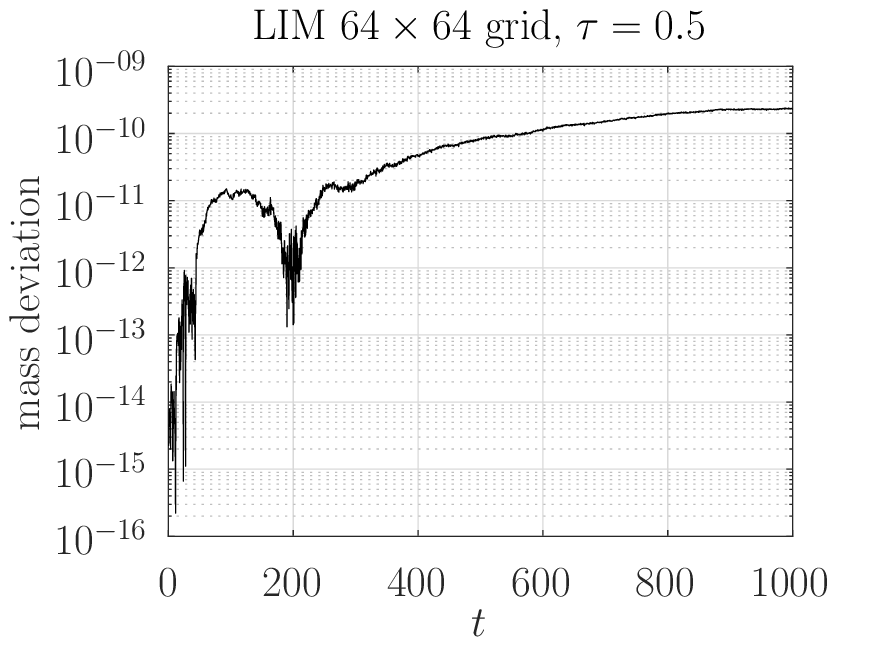}
\includegraphics[width=0.49\linewidth]{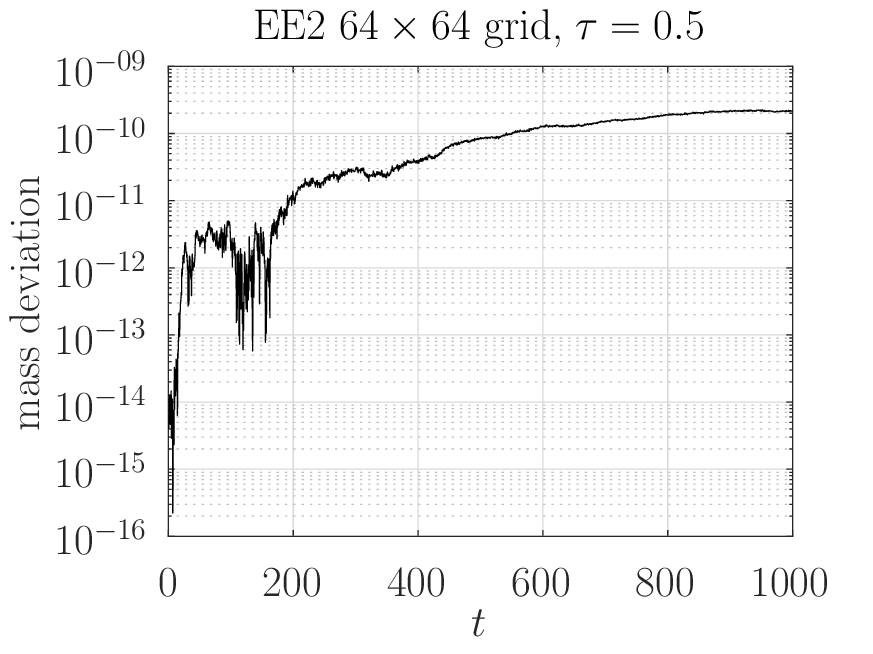}
\caption{Mass deviation $|(m^n-m^0)/m^0-1|$, with $m^n=\sum_jy_j^n$, 
versus time for the LIM (left) and EE2($\tol=10^{-2}$) (right) schemes 
run with constant $\tau=0.5$}
\label{f:m1}  
\end{figure}

\subsection{LIM and EE2 schemes with the Eyre splitting}
We now present test results for the LIM and EE2 schemes combined
with the Eyre splitting.  
Just as in the previous test, the EE2 scheme is provided with
a tolerance parameter~$\tol$, for which the tolerance in the Krylov
subspace method~$\tol_{\varphi}$ is determined by formula~\eqref{tol_phi}. 
The matrix~$\widehat{A}_n$ is now similar to a symmetric positive
semidefinite matrix and, hence, is guaranteed to
have nonnegative eigenvalues (see Proposition~\ref{p:stab}), so that
the RT restarting within the Krylov subspace evaluation
of the matrix function~$\varphi$ should now work reliably.
Therefore, we set the maximal Krylov subspace dimension 
$\mmax$ to a realistic value~$\mmax=30$ and denote the scheme
with this $\mmax$ value as EE2(30).  Choosing such a value of~$\mmax$ 
allows to compare the costs in two schemes in terms of the needed 
matrix-vector multiplication number.
 
Error values achieved by both schemes, together with the number
of required matrix-vector multiplications, are presented in
Table~\ref{t:dt_Eyre}.  For the EE2(30) scheme, these data are obtained
with a relaxed tolerance value~$\tol=10^{-2}$ which, taking into account
the large error triggered by the Eyre splitting, appears to be sufficient.  
In the left plot of Figure~\ref{f:Eyre}, we plot the error values for both
schemes versus the time step size~$\tau$.  The plot is produced
with smaller tolerance values in the EE2 scheme, for which
the error values have converged (i.e., decreasing tolerance further 
does not change the error value). The plot
confirms a well known fact that the Eyre splitting has accuracy 
order~1.  
In the right plot of Figure~\ref{f:Eyre},
the error values reported in Table~\ref{t:dt_Eyre} are given
versus the required matrix-vector multiplication number. 

As we see, the EE2(30) scheme allows to achieve an accuracy comparable
to that of the LIM scheme for much smaller number of matrix-vector
multiplications.  The price for this increased efficiency is storage
and handling of $\mmax=30$ Krylov subspace vectors.  
As is clear from the results in the previous sections, the LIM and EE2
schemes have essentially different error behavior.  
Therefore, almost indistinguishable error convergence curves 
for the Eyre-stabilized LIM and EE2 schemes 
(see the left plot in Figure~\ref{f:Eyre}) are probably due to
a large Eyre splitting error which dominates the errors in both
schemes.    
Discrete energy, given by~\eqref{Ediscr}, and mass deviation plots 
for the EE2 scheme combined 
the Eyre splitting are presented in Figure~\ref{f:Eyre2}.

\begin{table}
\caption{Achieved accuracy and the number 
of spent matvecs (matrix-vector products) for the LIM and the EE2($\mmax=30$)
schemes combined with the Eyre splitting and run with constant
time step size~$\tau$.}
\label{t:dt_Eyre}
\begin{center}
\begin{tabular}{ccccc}
\hline\hline
$\tau$  & \multicolumn{2}{c}{LIM}    & \multicolumn{2}{c}{EE2(30)} \\
        & \# matvecs    &error\ \ \  &\ \ \ \# matvecs    &  error (\tol) \\
\hline
\multicolumn{5}{c}{$64\times 64$ grid}\\
0.5     &  21878  & {\tt9.93e-01} & 13776 & {\tt4.84e-01} ($10^{-2}$)\\ 
0.25    &  28000  & {\tt6.34e-01} & 17391 & {\tt5.33e-01} ($10^{-2}$)\\
0.125   &  40000  & {\tt5.05e-01} & 16965 & {\tt6.67e-02} ($10^{-2}$)\\
0.0625  &  48000  & {\tt7.47e-02} & 23910 & {\tt2.41e-02} ($10^{-2}$)  \\
0.03125 &  96000  & {\tt3.85e-02} & 35818 & {\tt1.13e-02} ($10^{-2}$)  \\
\hline
\multicolumn{5}{c}{$128\times 128$ grid}\\
0.5     &   70000 & {\tt8.02e-01} &  54973 & {\tt7.98e-01} ($10^{-2}$)\\
0.25    &  100000 & {\tt5.65e-01} &  66443 & {\tt4.58e-02} ($10^{-2}$)\\
0.125   &  150444 & {\tt3.06e-02} &  78237 & {\tt2.92e-02} ($10^{-2}$)\\
0.0625  &  208000 & {\tt1.84e-02} &  76857 & {\tt2.16e-02} ($10^{-2}$)\\
0.03125 &  288000 & {\tt1.03e-02} & 110008 & {\tt1.15e-02} ($10^{-2}$)\\
\hline
\end{tabular}
\end{center}
\end{table}

\begin{figure}
\includegraphics[width=0.49\linewidth]{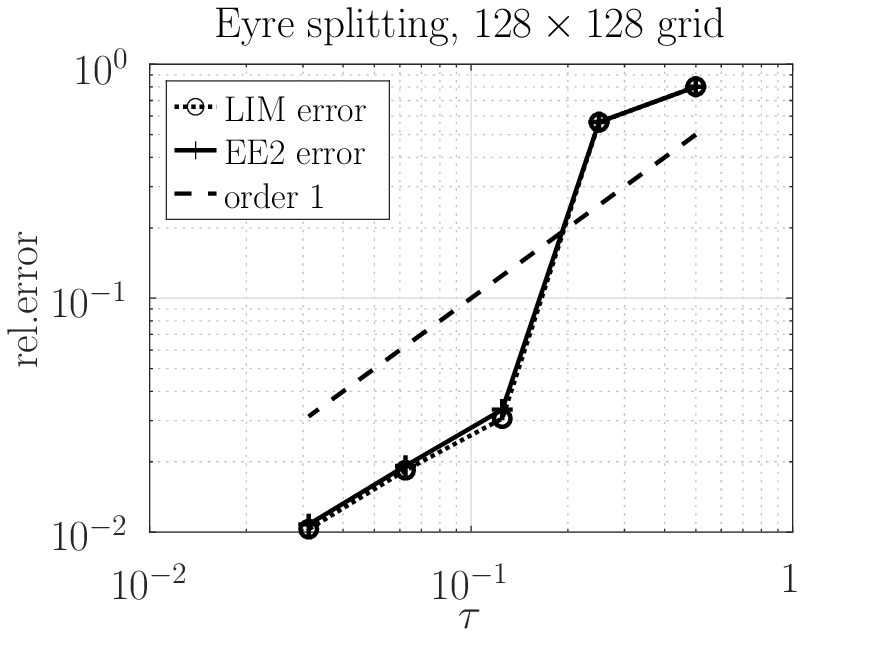}
\includegraphics[width=0.49\linewidth]{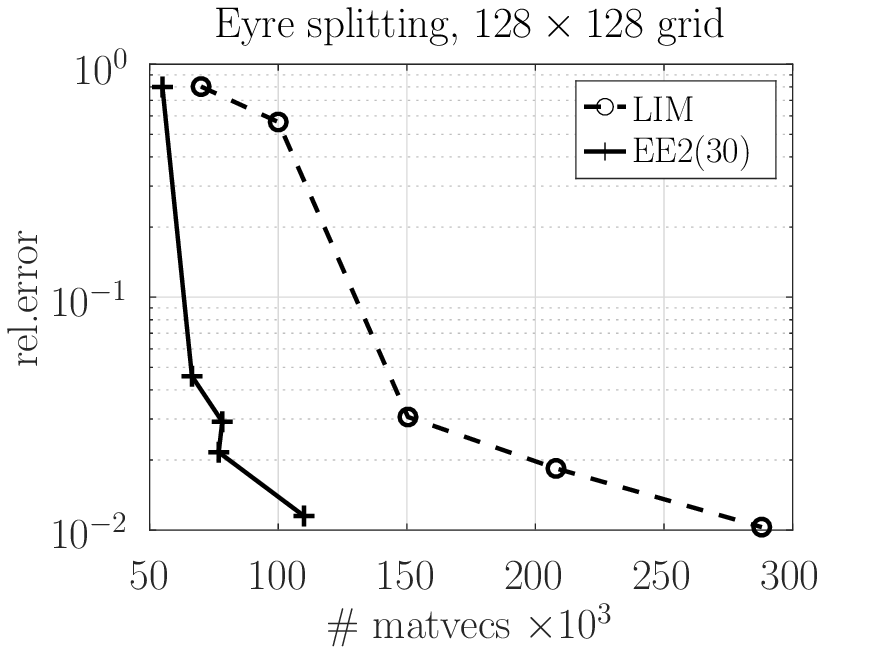}
\caption{Left: error convergence rate  
for the LIM and EE2(30) combined with the Eyre splitting. 
Right: achieved error versus matrix-vector multiplication number
for the LIM and EE2(30) combined with the Eyre splitting. 
The right plot is produced with the data from Table~\ref{t:dt_Eyre}.}
\label{f:Eyre}  
\end{figure}

\begin{figure}
\includegraphics[width=0.49\linewidth]{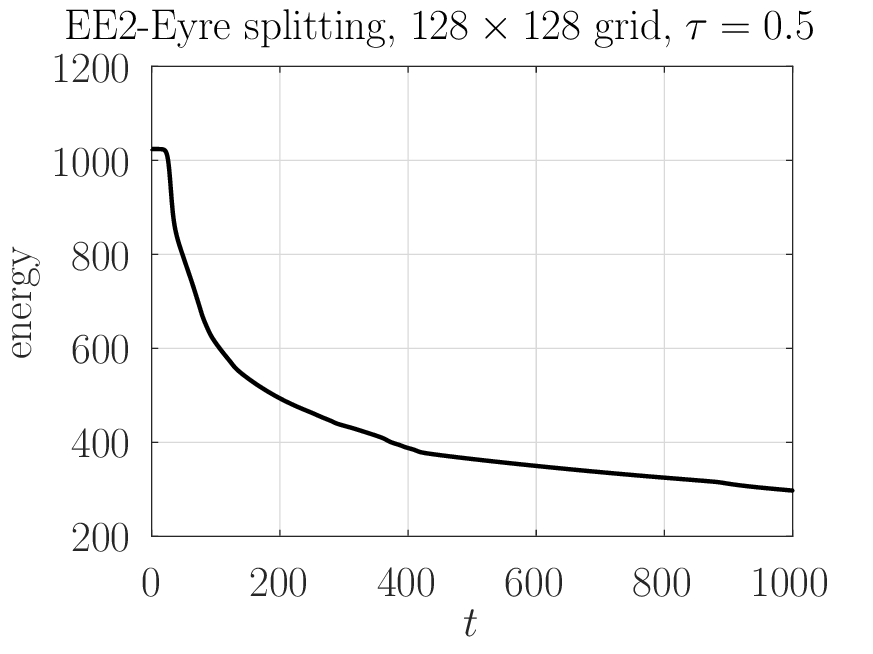}
\includegraphics[width=0.49\linewidth]{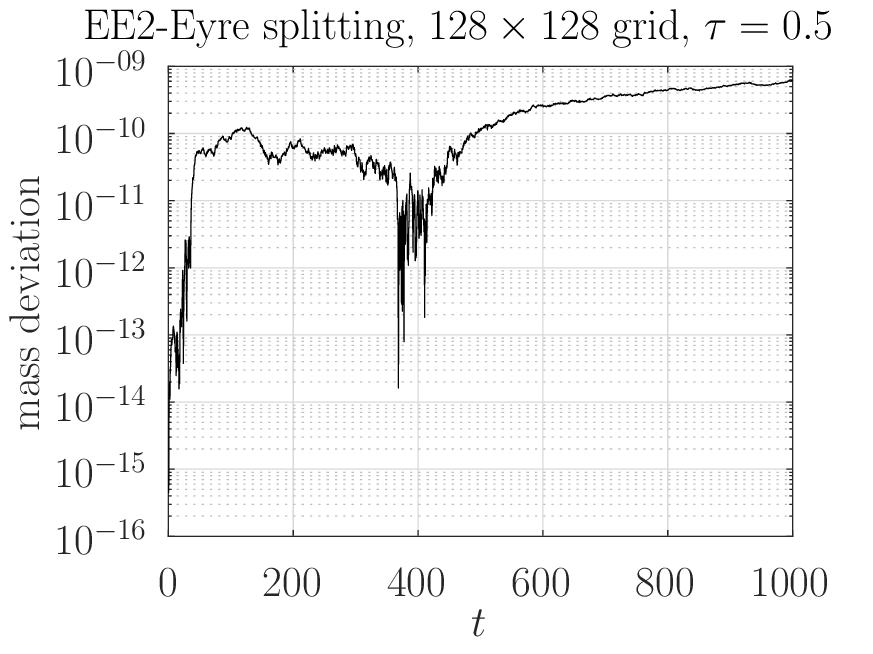}  
\caption{Discrete energy~\eqref{Ediscr} (left) and mass deviation
 $|(m^n-m^0)/m^0-1|$, with $m^n=\sum_jy_j^n$, (right) 
versus time for the EE2($\mmax=30$) scheme combined with the Eyre
splitting and run with constant $\tau=0.5$ and $\tol=10^{-2}$.}
\label{f:Eyre2}    
\end{figure}

\subsection{The LIM and EE2 schemes with adaptively chosen time step size}
In this section both schemes are tested with time step size chosen
adaptively as discussed in Sections~\ref{s:LIMadapt} and~\ref{s:EEadapt}.
In this setting, a tolerance value~$\tol$ is provided to both schemes
which is then used to choose an appropriate time step size 
(cf.~\eqref{LIMdt},\eqref{EEdt}) and, in 
the EE2 scheme, also to determine the tolerance value~$\tol_{\varphi}$
for the Krylov subspace method in~\eqref{tol_phi}.
In addition, both schemes are given an initial time step size~$\tau_0$.
In the EE2 scheme the value of~$\tau_0$ can be instantly decreased 
by the Krylov subspace method while the time step is actually made, 
see Figure~\ref{f:RT}.  That is why in the EE2 scheme we set $\tau_0$ 
to a large value $\tau_0=1$ in all the runs.
The $\tau_0$ values used in the LIM scheme are reported. 
The maximal Krylov subspace dimension~$\mmax$ in the EE2 scheme is 
set to either~10 or~30 and we denote the resulting scheme 
as EE2(10) and EE2(30), respectively.   

\begin{table}
\caption{Achieved accuracy and the number of spent matvecs (matrix-vector 
products) for the LIM and the EE2 schemes run with adaptively chosen~$\tau$
on the $128 \times 128$ space grid.  For the EE2 scheme
initial time step size value is initially set to $\tau_0=1$ 
and instantly changed by the Krylov subspace method, see Figure~\ref{f:RT}.
The maximal Krylov subspace dimension is $\mmax=10$ in~EE2(10)
and $\mmax=30$ in EE2(30).}
\label{t:var_dt128}
\begin{center}
\begin{tabular}{ccc}
%
\hline\hline
method, \tol{} ($\tau_0$)  & \# matvecs    &  error  \\
\hline
LIM, $10^{-2}$       ($\tau_0=1.0$)  &   38940 & {\tt2.73e-03} \\
LIM, $10^{-3}$       ($\tau_0=0.5$)  &  120476 & {\tt3.62e-04} \\
LIM, $10^{-4}$       ($\tau_0=0.25$) &  397133 & {\tt1.22e-04} \\
EE2(10), $10^{-2}$                   &  42856  & {\tt9.63e-03} \\
EE2(10), $10^{-3}$                   &  57100  & {\tt3.31e-03} \\
EE2(10), $10^{-4}$                   & 110088  & {\tt7.57e-05} \\
EE2(30), $10^{-2}$                   &  22800  & {\tt4.62e-03} \\
EE2(30), $10^{-3}$                   &  44493  & {\tt7.03e-04} \\
EE2(30), $10^{-4}$                   &  73867  & {\tt1.28e-04} \\
\hline
\end{tabular}
\end{center}
\end{table}

\begin{table}
\caption{Achieved accuracy and the number of spent matvecs (matrix-vector 
products) for the LIM and the EE2 schemes run with adaptively chosen~$\tau$
on the $192 \times 192$ space grid.  For the EE2 scheme
initial time step size value is initially set to $\tau_0=1$ 
and instantly changed by the Krylov subspace method, see Figure~\ref{f:RT}.
The maximal Krylov subspace dimension is $\mmax=10$ in~EE2(10)
and $\mmax=30$ in EE2(30).}
\label{t:var_dt192}
\begin{center}
\begin{tabular}{ccc}
\hline\hline
method, \tol{} ($\tau_0$)  & \# matvecs    &  error  \\
\hline
LIM, $10^{-2}$       ($\tau_0=1.0$)  &   85843 & {\tt1.12e-03} \\ 
LIM, $10^{-3}$       ($\tau_0=0.5$)  &  265411 & {\tt1.82e-04} \\ 
LIM, $10^{-4}$       ($\tau_0=0.25$) &  893808 & {\tt3.79e-05} \\ 
EE2(10), $10^{-2}$                   &  201460 & {\tt1.07e-03} \\ 
EE2(10), $10^{-3}$                   &  208018 & {\tt1.04e-03} \\ 
EE2(10), $10^{-4}$                   &  339418 & {\tt4.33e-04} \\ 
EE2(10), $10^{-5}$                   &  546624 & {\tt5.35e-06} \\ 
EE2(30), $10^{-2}$                   &   75053 & {\tt2.99e-03} \\ 
EE2(30), $10^{-3}$                   &  126485 & {\tt5.35e-04} \\ 
EE2(30), $10^{-4}$                   &  221243 & {\tt1.21e-05} \\ 
\hline
\end{tabular}
\end{center}
\end{table}

In Tables~\ref{t:var_dt128} and~\ref{t:var_dt192}, for the space 
grids~$128\times 128$ and~$192\times 192$, respectively,
we show achieved error values and corresponding required matrix-vector
multiplication numbers.  The data shown in the tables are also
visualized in the error-versus-work plots in Figure~\ref{f:acc_vs_mv}.
As we see, for moderate accuracy requirements the LIM scheme is slightly
more efficient than the EE2(10) scheme.  For stricter tolerance values
the EE2(10) becomes more efficient than LIM.  The EE2(30) outperforms
both the LIM and EE2(10) schemes for the whole tolerance range.

\begin{figure}
\includegraphics[width=0.49\linewidth]{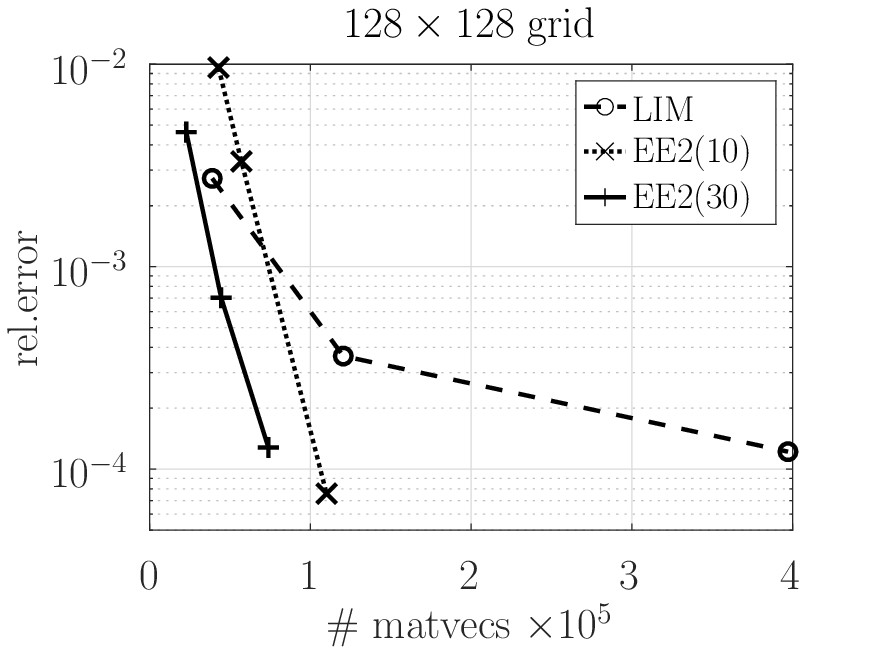}
\includegraphics[width=0.49\linewidth]{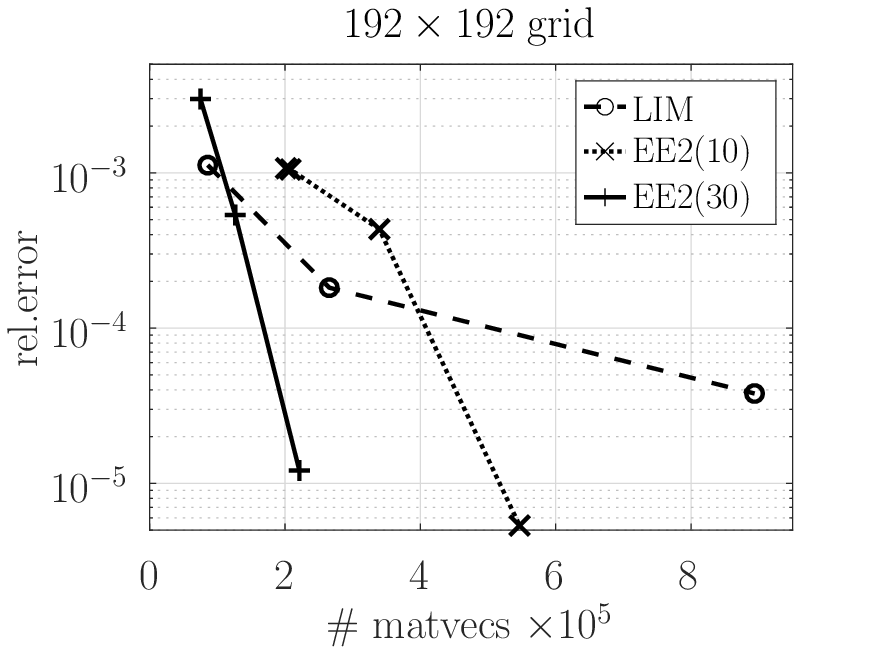}
\caption{Achieved accuracy versus number of required matrix-vector products 
(matvecs) for the adaptive LIM, EE2(10) and EE2(30) schemes
for $128\times 128$ (left) and $192\times 192$ (right) grids}
\label{f:acc_vs_mv}
\end{figure}

In Figure~\ref{f:Ee2} plots of the total energy versus time are presented
for the adaptive LIM($\tau_0=1$) and EE2(10) schemes, where the discrete energy is computed according to~\eqref{Ediscr}.
We deliberately produce these plots
for both schemes run with a tolerance value~$10^{-2}$:
as we see, even with this moderate tolerance value the energy behavior
in both schemes is adequate.

\begin{figure}
\includegraphics[width=0.49\linewidth]{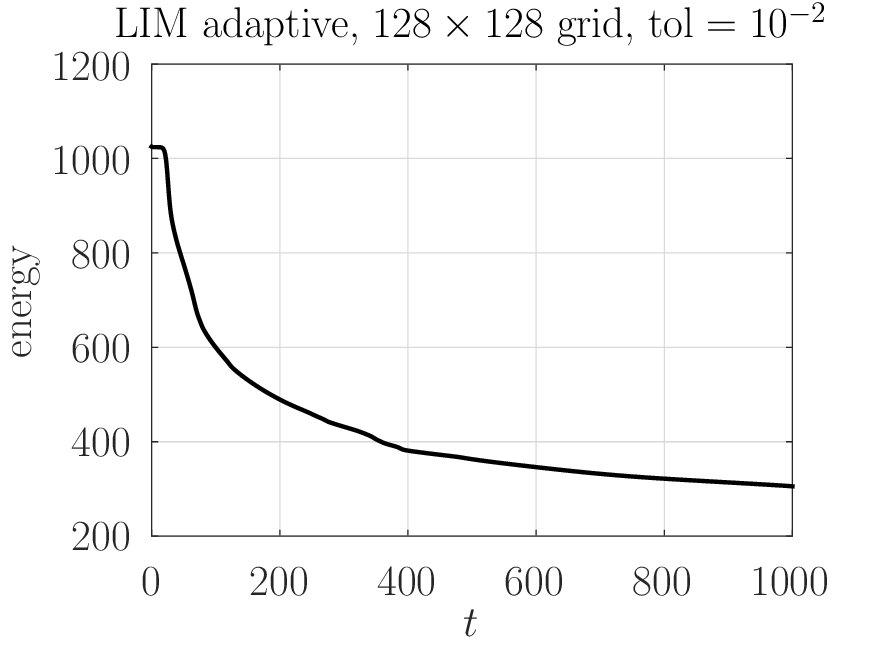}
\includegraphics[width=0.49\linewidth]{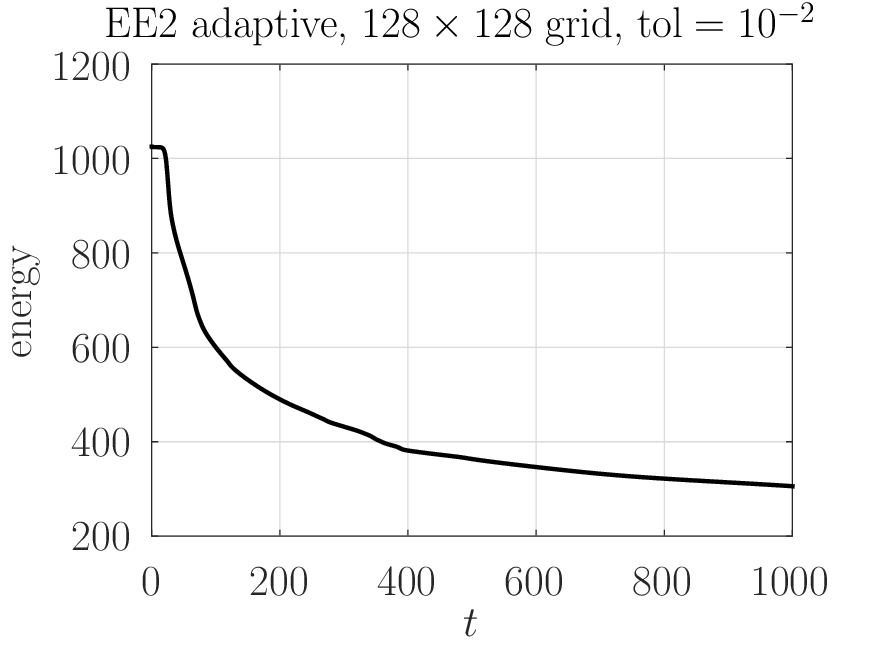}
\caption{Discrete energy versus time for the adaptive 
LIM($\tau_0=1$) (left) and EE2(10) (right) schemes run with $\tol=10^{-2}$}
\label{f:Ee2}
\end{figure}

For the same runs of the adaptive LIM($\tau_0=1$) and EE2(10) schemes
executed with the tolerance value~$10^{-2}$,
mass deviation plots are given in Figure~\ref{f:m2}.  As we see, both scheme
demonstrate quite acceptable mass conservation properties
even with this relaxed tolerance value.

\begin{figure}
\includegraphics[width=0.49\linewidth]{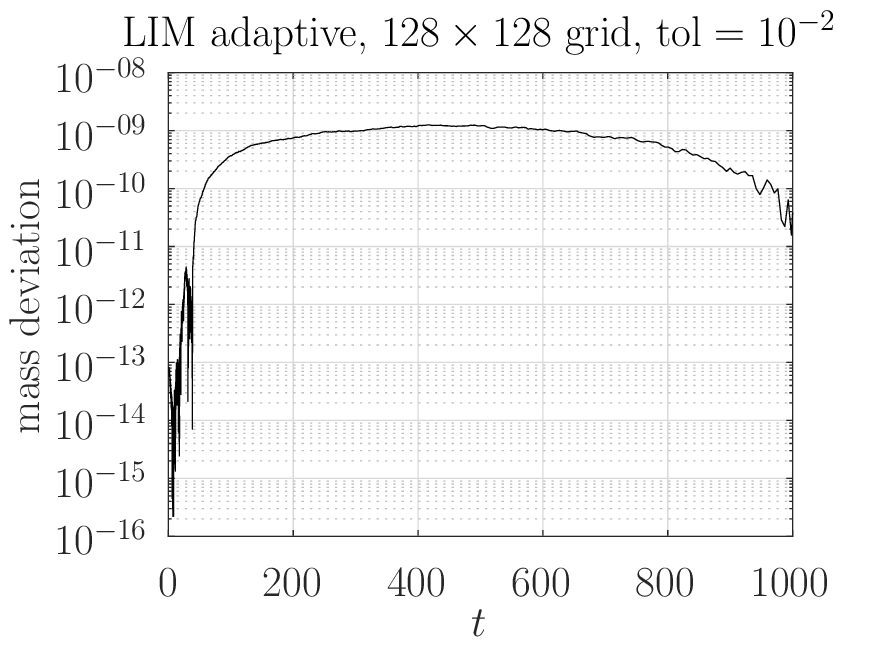}
\includegraphics[width=0.49\linewidth]{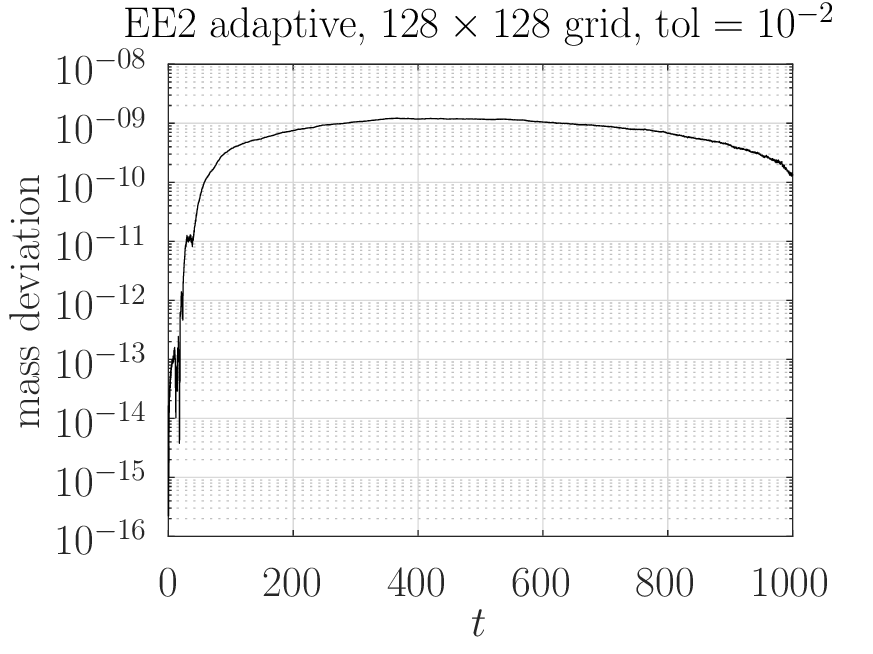}
\caption{Mass deviation $|(m^n-m^0)/m^0-1|$, with $m^n=\sum_jy_j^n$, 
versus time for the adaptive 
LIM($\tau_0=1$) (left) and EE2(10) (right) schemes run with $\tol=10^{-2}$}
\label{f:m2}
\end{figure}

Comparing the data in Tables~\ref{t:dt_const} and~\ref{t:var_dt128}
for the $128\times 128$ space grid, we see that the adaptive time step
size selection yields an efficiency gain for the LIM scheme by approximately
a factor~3.5 (for instance, 38940 instead of 136000 matrix-vector products to
reach roughly the same accuracy).  For the EE2 scheme the gain factor
is up to approximately a factor~2 (e.g., 73867 instead of 
141148 matrix-vector products to reach an accuracy $\approx 10^{-4}$).  

\section{Conclusions and an outlook to further research}
\label{s:concl}
In this paper an exponential time stepping scheme 
is proposed for numerical time integration of the Cahn--Hilliard equation.
The scheme has a nonstiff accuracy order~2 and we call it 
exponential Euler order~2 scheme (EE2).   
It is explicit in the sense that it does not require linear 
system solution.  We compare it to another explicit scheme, the 
local iteration modified (LIM) scheme~\cite{LokLokDAN,ShvedovZhukov,Zhukov2011}, 
which is recently introduced for the Cahn--Hilliard equation 
in~\cite{BotchevFahurdinovSavenkov2024}.  For both schemes we show how
an adaptive time step size selection, similar to recently proposed
in~\cite{BotchevZhukov2024}, can be implemented.  We also describe
how to combine both EE2 and LIM schemes with a variant of the
Eyre convex splitting, for which we show similarity of the
involved matrix to a symmetric positive semidefinite matrix.  

Results of numerical experiments presented here allow to make the following
conclusions.
\begin{enumerate}
\item 
When employed with a constant time step size and without the Eyre convex
splitting, the LIM scheme works robustly, whereas the EE2 scheme has 
problems with growing Krylov subspace dimension due to instability
(negative eigenvalues appearing in the Jacobian).  
With the maximal Krylov subspace dimension set
to an unrealistically large value $\mmax=100$, the EE2 scheme 
delivers a significantly higher time integration
accuracy than the LIM scheme for a comparable number of matrix-vector
multiplications, see Table~\ref{t:dt_const}.

\item
The Eyre convex splitting removes instability problems and allows
to set the maximal Krylov subspace dimension in EE2 to a realistic 
value (we had $\mmax=30$ in the tests).
However, combination with the Eye convex splitting significantly decreases 
accuracy in both the LIM and EE2 schemes.
The EE2 scheme combined with the Eyre convex splitting appears to
be more efficient than the LIM scheme combined with the Eyre splitting,
see Figure~\ref{f:Eyre}.

\item
The proposed adaptive time step size selection leads to an essential 
efficiency gain in both schemes: a gain factor is 
approximately 3.5 for the LIM scheme and around 2 for the EE2 scheme
(compare Tables~\ref{t:dt_const} and~\ref{t:var_dt128}).
The adaptive time step size selection also allows the EE2 scheme to deal 
with instability caused by small negative eigenvalues, while keeping 
the Krylov subspace dimension moderate ($\mmax=10$ and $\mmax=30$ in the tests)
and without the Eyre splitting stabilization.

\item
For low accuracy requirements ($\sim 10^{-3}$) 
the adaptive LIM scheme is slightly more efficient than the adaptive 
EE2 scheme with $\mmax=10$, see Figure~\ref{f:acc_vs_mv}.
For higher accuracy requirements ($\leqs 10^{-4}$) 
the adaptive EE2($\mmax=10$) scheme 
outperforms the adaptive LIM scheme. 
The adaptive EE2($\mmax=30$) scheme is essentially more efficient
than the adaptive LIM scheme.  

\item
The LIM scheme does not require any parameter tuning, whereas 
the tolerance in the Krylov subspace method of the EE2 scheme 
has to be selected carefully, see formula~\eqref{tol_phi}.  
In overall, comparing the LIM and EE2 schemes, we conclude 
the EE2 scheme allows to increase efficiency significantly (up to 
a factor~4 less matrix-vector multiplications), 
at the cost of storing and handling $\mmax$ Krylov subspace vectors.
\end{enumerate}

Future research can be aimed at improving the EE2 scheme, in particular,
with respect to robustness of its Krylov subspace method
for matrices having small negative ``unstable'' eigenvalues.
A question whether higher order methods can be applicable and
efficient for large-scale Cahn--Hilliard problems seems to be another
promising research direction.  

\section*{Acknowledgments}
The author thanks his colleagues
Vladislav Balashov, Ilyas Fahurdinov, Leonid Knizhnerman
and Evgenii Savenkov for fruitful discussions.

\section{Funding}
The authors of this work declare that their work was not financially
supported.

\section{CONFLICT OF INTEREST}
The authors of this work declare that they have no conflicts of
interest.

\def\ocirc#1{\ifmmode\setbox0=\hbox{$#1$}\dimen0=\ht0 \advance\dimen0
  by1pt\rlap{\hbox to\wd0{\hss\raise\dimen0
  \hbox{\hskip.2em$\scriptscriptstyle\circ$}\hss}}#1\else {\accent"17 #1}\fi}
  \def\cprime{$'$}


\begin{thebibliography}{10}

\bibitem{Feng_ea2020}
Y.~Feng, Y.~Feng, G.~Iyer, and J.-L. Thiffeault, ``Phase separation in the
  advective {Cahn}--{Hilliard} equation,'' {\em Journal of NonLinear Science},
  vol.~30, pp.~2821--2845, 2020.
\newblock \url{https://doi.org/10.1007/s00332-020-09637-6}.

\bibitem{XiaKimLi2022}
Q.~Xia, J.~Kim, and Y.~Li, ``Modeling and simulation of multi-component
  immiscible flows based on a modified {Cahn}--{Hilliard} equation,'' {\em
  European Journal of Mechanics-B/Fluids}, vol.~95, pp.~194--204, 2022.
\newblock \url{https://doi.org/10.1016/j.euromechflu.2022.04.013}.

\bibitem{CherfilsFakihMiranville2016}
L.~Cherfils, H.~Fakih, and A.~Miranville, ``A {Cahn}--{Hilliard} system with a
  fidelity term for color image inpainting,'' {\em Journal of Mathematical
  Imaging and Vision}, vol.~54, pp.~117--131, 2016.
\newblock \url{https://doi.org/10.1007/s10851-015-0593-9}.

\bibitem{WiseLowengrubKimJohnson2004}
S.~M. Wise, J.~S. Lowengrub, J.~S. Kim, and W.~C. Johnson, ``Efficient
  phase-field simulation of quantum dot formation in a strained heteroepitaxial
  film,'' {\em Superlattices and Microstructures}, vol.~36, no.~1-3,
  pp.~293--304, 2004.
\newblock \url{https://doi.org/10.1016/j.spmi.2004.08.029}.

\bibitem{Garcke_ea2001}
H.~Garcke, T.~Preusser, M.~Rumpf, A.~C. Telea, U.~Weikard, and J.~J. van Wijk,
  ``A phase field model for continuous clustering on vector fields,'' {\em IEEE
  Transactions on Visualization and Computer Graphics}, vol.~7, no.~3,
  pp.~230--241, 2001.
\newblock \url{https://doi.org/10.1109/2945.942691}.

\bibitem{Eyre1998}
D.~J. Eyre, ``An unconditionally stable one-step scheme for gradient systems,''
  {\em Unpublished article}, vol.~6, 1998.
\newblock
  \url{https://citeseerx.ist.psu.edu/doc_view/pid/4f016a98fe25bfc06b9bcab3d85eeaa47d3ad3ca}.

\bibitem{Eyre1998a}
D.~J. Eyre, ``Unconditionally gradient stable time marching the
  {Cahn}--{Hilliard} equation,'' {\em MRS online proceedings library (OPL)},
  vol.~529, p.~39, 1998.
\newblock \url{https://doi.org/10.1557/PROC-529-39}.

\bibitem{VollmayrleeRutenberg2003}
B.~P. Vollmayr-Lee and A.~D. Rutenberg, ``Fast and accurate coarsening
  simulation with an unconditionally stable time step,'' {\em Phys.\ Rev.~E},
  vol.~68, no.~6, p.~066703, 2003.
\newblock \url{https://doi.org/10.1103/PhysRevE.68.066703}.

\bibitem{WiseWangLowengrub2009}
S.~M. Wise, C.~Wang, and J.~S. Lowengrub, ``An energy-stable and convergent
  finite-difference scheme for the phase field crystal equation,'' {\em SIAM J.
  Numer.\ Anal.}, vol.~47, no.~3, pp.~2269--2288, 2009.

\bibitem{TierraGGonzalez2015}
G.~Tierra and F.~Guill\'en-Gonz\'alez, ``Numerical methods for solving the
  {C}ahn-{H}illiard equation and its applicability to related energy-based
  models,'' {\em Arch.\ Comput.\ Methods Eng.}, vol.~22, no.~2, pp.~269--289,
  2015.
\newblock \url{https://doi.org/10.1007/s11831-014-9112-1}.

\bibitem{WellsKuhlGarikipati2006}
G.~N. Wells, E.~Kuhl, and K.~Garikipati, ``A discontinuous {Galerkin} method
  for the {Cahn}--{Hilliard} equation,'' {\em J. Comput.\ Phys.}, vol.~218,
  no.~2, pp.~860--877, 2006.
\newblock \url{https://doi.org/10.1016/j.jcp.2006.03.010}.

\bibitem{KayWelford2006}
D.~Kay and R.~Welford, ``A multigrid finite element solver for the
  {Cahn}--{Hilliard} equation,'' {\em J. Comput.\ Phys.}, vol.~212, no.~1,
  pp.~288--304, 2006.
\newblock \url{https://doi.org/10.1016/j.jcp.2005.07.004}.

\bibitem{GuoXu2014}
R.~Guo and Y.~Xu, ``Efficient solvers of discontinuous {Galerkin}
  discretization for the {Cahn}--{Hilliard} equations,'' {\em J. Sci.\
  Comput.}, vol.~58, pp.~380--408, 2014.
\newblock \url{https://doi.org/10.1007/s10915-013-9738-4}.

\bibitem{ShenYang2010}
J.~Shen and X.~Yang, ``Numerical approximations of {Allen}--{Cahn} and
  {Cahn}--{Hilliard} equations,'' {\em Discrete Contin.\ Dyn.\ Syst}, vol.~28,
  no.~4, pp.~1669--1691, 2010.
\newblock \url{http://dx.doi.org/10.3934/dcds.2010.28.1669}.

\bibitem{DehghanMohammadi2015}
M.~Dehghan and V.~Mohammadi, ``The numerical solution of {Cahn--Hilliard}
  ({CH}) equation in one, two and three-dimensions via globally radial basis
  functions ({GRBF}s) and {RBF}s-differential quadrature ({RBFs-DQ}) methods,''
  {\em Engineering Analysis with Boundary Elements}, vol.~51, pp.~74--100,
  2015.
\newblock \url{https://doi.org/10.1016/j.enganabound.2014.10.008}.

\bibitem{StuartHumphries1994}
A.~M. Stuart and A.~R. Humphries, ``Model problems in numerical stability
  theory for initial value problems,'' {\em SIAM Rev.}, vol.~36, no.~2,
  pp.~226--257, 1994.
\newblock \url{https://doi.org/10.1137/1036054}.

\bibitem{Lee_ea2013}
S.~Lee, C.~Lee, H.~G. Lee, and J.~Kim, ``Comparison of different numerical
  schemes for the {Cahn}-{Hilliard} equation,'' {\em J.~Korean Soc.\ Ind.\
  Appl.\ Math.}, vol.~17, no.~3, pp.~197--207, 2013.
\newblock \url{https://doi.org/10.12941/jksiam.2013.17.197}.

\bibitem{LokLokDAN}
V.~O. Lokutsievskii and O.~V. Lokutsievskii, ``On numerical solution of
  boundary value problems for equations of parabolic type,'' {\em Soviet Math.\
  Dokl.}, vol.~34, no.~3, pp.~512--516, 1987.
\newblock \url{https://zbmath.org/?q=an:0637.65098}.

\bibitem{ShvedovZhukov}
A.~S. Shvedov and V.~T. Zhukov, ``Explicit iterative difference schemes for
  parabolic equations,'' {\em Russian J. Numer.\ Anal.\ Math.\ Modelling},
  vol.~13, no.~2, pp.~133--148, 1998.
\newblock \url{https://doi.org/10.1515/rnam.1998.13.2.133}.

\bibitem{RKC97}
B.~P. Sommeijer, L.~F. Shampine, and J.~G. Verwer, ``{RKC}: An explicit solver
  for parabolic {PDE}s,'' {\em J. Comput.\ Appl.\ Math.}, vol.~88,
  pp.~315--326, 1998.
\newblock \url{https://doi.org/10.1016/S0377-0427(97)00219-7}.

\bibitem{Lebedev98}
V.~I. Lebedev, ``Explicit difference schemes for solving stiff systems of
  {ODE}s and {PDE}s with complex spectrum,'' {\em Russian J. Numer.\ Anal.\
  Math.\ Modelling}, vol.~13, no.~2, pp.~107--116, 1998.
\newblock \url{https://doi.org/10.1515/rnam.1998.13.2.107}.

\bibitem{MRAIpap}
M.~A. Botchev, G.~L.~G. Sleijpen, and H.~A. van~der Vorst, ``Stability control
  for approximate implicit time stepping schemes with minimum residual
  iterations,'' {\em Appl.\ Numer.\ Math.}, vol.~31, no.~3, pp.~239--253, 1999.
\newblock \url{https://doi.org/10.1016/S0168-9274(98)00138-X}.

\bibitem{RKC2004}
J.~G. Verwer, B.~P. Sommeijer, and W.~Hundsdorfer, ``{RKC} time-stepping for
  advection--diffusion--reaction problems,'' {\em J.~Comput.\ Phys.}, vol.~201,
  no.~1, pp.~61--79, 2004.

\bibitem{CzaoDin1957}
Y.~\v{C}\v{z}ao Din, ``On the stability of difference schemes for the solutions
  of parabolic differential equations,'' {\em Soviet Math.\ Dokl.}, vol.~117,
  pp.~578--581, 1958.
\newblock \url{https://zbmath.org/?q=an:0102.33501}.

\bibitem{CzaoDin1960}
Y.~\v{C}\v{z}ao Din, ``Some difference schemes for the numerical solution of
  differential equations of parabolic type,'' {\em Mat.\ Sb., N. Ser.},
  vol.~50, no.~92, pp.~391--422, 1960.
\newblock \url{https://www.mathnet.ru/rus/sm4800}.

\bibitem{ART}
M.~A. Botchev and L.~A. Knizhnerman, ``{ART}: Adaptive residual-time restarting
  for {Krylov} subspace matrix exponential evaluations,'' {\em J. Comput.\
  Appl.\ Math.}, vol.~364, p.~112311, 2020.
\newblock \url{https://doi.org/10.1016/j.cam.2019.06.027}.

\bibitem{BKT21}
M.~A. Botchev, L.~Knizhnerman, and E.~E. Tyrtyshnikov, ``Residual and
  restarting in {Krylov} subspace evaluation of the $\varphi$ function,'' {\em
  SIAM J. Sci.\ Comput.}, vol.~43, no.~6, pp.~A3733--A3759, 2021.
\newblock \url{https://doi.org/10.1137/20M1375383}.

\bibitem{Zhukov2011}
V.~T. Zhukov, ``Explicit methods of numerical integration for parabolic
  equations,'' {\em Math.\ Models Comput.\ Simul.}, vol.~3, no.~3,
  pp.~311--332, 2011.
\newblock \url{https://doi.org/10.1134/S2070048211030136}.

\bibitem{BotchevZhukov2025}
M.~A. Botchev and V.~T. Zhukov, ``On positivity of the local iteration modified
  time integration scheme,'' {\em Lobachevskii J. Math.}, 2025.
\newblock To appear.

\bibitem{BotchevFahurdinovSavenkov2024}
M.~A. Botchev, I.~A. Fahurdinov, and E.~B. Savenkov, ``Efficient and stable
  time integration of {Cahn}--{Hilliard} equations: Explicit, implicit, and
  explicit iterative schemes,'' {\em Comput.\ Math.\ and Math.\ Phys.},
  vol.~64, pp.~1726--1746, 2024.
\newblock \url{https://doi.org/10.1134/S0965542524700945}.

\bibitem{BotchevZhukov2024}
M.~A. Botchev and V.~T. Zhukov, ``Adaptive iterative explicit time integration
  for nonlinear heat conduction problems,'' {\em Lobachevskii J. Math.},
  vol.~45, no.~1, pp.~12--20, 2024.
\newblock \url{https://doi.org/10.1134/S1995080224010086}.

\bibitem{HornJohnsonI}
R.~A. Horn and C.~R. Johnson, {\em Matrix Analysis}.
\newblock Cambridge University Press, 1986.

\bibitem{ODE1(HNW)}
E.~Hairer, S.~P. N{\o}rsett, and G.~Wanner, {\em Solving Ordinary Differential
  Equations I. {Nonstiff} Problems}.
\newblock Springer Series in Computational Mathematics 8, Springer--Verlag,
  1987.

\bibitem{FeodoritovaNovikovaZhukov2025}
O.~B. Feodoritova, N.~D. Novikova, and V.~T. Zhukov, ``On adaptive time step
  selection procedure for parabolic equations with the {LIM} scheme,'' {\em
  Lobachevskii J. Math.}, 2025.
\newblock To appear.

\bibitem{DruskinKnizh89}
V.~L. Druskin and L.~A. Knizhnerman, ``Two polynomial methods of calculating
  functions of symmetric matrices,'' {\em U.S.S.R.\ Comput.\ Maths.\ Math.\
  Phys.}, vol.~29, no.~6, pp.~112--121, 1989.

\bibitem{GallSaad}
E.~Gallopoulos and Y.~Saad, ``Efficient solution of parabolic equations by
  {Krylov} approximation methods,'' {\em SIAM J. Sci.\ Stat.\ Comput.},
  vol.~13, no.~5, pp.~1236--1264, 1992.

\bibitem{HochLub97}
M.~Hochbruck and C.~Lubich, ``On {Krylov} subspace approximations to the matrix
  exponential operator,'' {\em SIAM J. Numer.\ Anal.}, vol.~34, pp.~1911--1925,
  Oct. 1997.

\bibitem{EXPOKIT}
R.~B. Sidje, ``{\sc Expokit.} {A} software package for computing matrix
  exponentials,'' {\em ACM Trans.\ Math.\ Softw.}, vol.~24, no.~1,
  pp.~130--156, 1998.
\newblock \url{www.maths.uq.edu.au/expokit/}.

\bibitem{SaadBook}
Y.~Saad, {\em Iterative Methods for Sparse Linear Systems}.
\newblock Book out of print, 2000.
\newblock \url{www-users.cs.umn.edu/~saad/books.html}.

\bibitem{vdVBook}
H.~A. van~der Vorst, {\em Iterative {Krylov} methods for large linear systems}.
\newblock Cambridge University Press, 2003.

\bibitem{Higham_bookFM}
N.~J. Higham, {\em Functions of Matrices: {Theory} and Computation}.
\newblock Philadelphia, PA, USA: Society for Industrial and Applied
  Mathematics, 2008.

\bibitem{CelledoniMoret97}
E.~Celledoni and I.~Moret, ``A {K}rylov projection method for systems of
  {ODE}s,'' {\em Appl. Numer. Math.}, vol.~24, no.~2-3, pp.~365--378, 1997.
\newblock \url{https://doi.org/10.1016/S0168-9274(97)00033-0}.

\bibitem{DruskinGreenbaumKnizhnerman98}
V.~L. Druskin, A.~Greenbaum, and L.~A. Knizhnerman, ``Using nonorthogonal
  {Lanczos} vectors in the computation of matrix functions,'' {\em SIAM J. Sci.
  Comput.}, vol.~19, no.~1, pp.~38--54, 1998.

\bibitem{HundsdorferVerwer:book}
W.~Hundsdorfer and J.~G. Verwer, {\em Numerical Solution of Time-Dependent
  Advection-Diffusion-Reaction Equations}.
\newblock Springer Verlag, 2003.

\bibitem{HochbruckOstermann2010}
M.~Hochbruck and A.~Ostermann, ``Exponential integrators,'' {\em Acta Numer.},
  vol.~19, pp.~209--286, 2010.

\bibitem{ODE2(HW)}
E.~Hairer and G.~Wanner, {\em Solving Ordinary Differential Equations II.
  {Stiff} and Differential--Algebraic Problems}.
\newblock Springer Series in Computational Mathematics 14, Springer--Verlag,
  2~ed., 1996.

\end{thebibliography}

\end{document}